\documentclass[leqno,12pt]{article}
\usepackage{amssymb,amsmath,amsthm,amscd}
\usepackage{latexsym}
\usepackage[OT2,T1]{fontenc}
\DeclareSymbolFont{cyrletters}{OT2}{wncyr}{m}{n}
\DeclareMathSymbol{\Sha}{\mathalpha}{cyrletters}{"58}

\theoremstyle{plain}
\newtheorem{theorem}{Theorem}[section]
\newtheorem{proposition}[theorem]{Proposition}
\newtheorem{lemma}[theorem]{Lemma}

\theoremstyle{remark}
\newtheorem*{remark}{\it Remark\/}
\newtheorem*{remarks}{\it Remarks\/}

\theoremstyle{definition}
\newtheorem{definition}[theorem]{Definition}
%

\def\mn{\medskip\noindent}
\def\sn{\smallskip\noindent}
\def\a{\alpha}
\def\ab{^\mathrm{ab}}
\def\AL{A_L}
\def\ALp{A_{L'}}
\def\C{\mathbb{C}}
\def\CH{\mathrm{CH}}
\def\chisE{\chi_{\s E}}

\def\cr{\mathrm{crys}}
\def\D{\mathcal{D}}
\def\Dcrys{D_\mathrm{crys}}
\def\Ds{{D_s}}

\def\Dst{D_\mathrm{st}}
\def\Dt{{D_t}}
\def\dual{^\vee}
\def\dv{\frak{d}_v}

\def\End{\mathrm{End}}
\def\et{\mathrm{et}}
\def\Etil{\widetilde{E}}
\def\Ext{\mathrm{Ext}}
\def\f{\varphi}
\def\fN{(\varphi,N)}
\def\F{\mathbb{F}}

\def\Fcyc{F_\mathrm{cyc}}
\def\Fp{{\F_p}}

\def\Fpvp{F'_{v'}}
\def\Fq{{\F_q}}

\def\Frob{\mathrm{Frob}}
\def\Frobkappa{\Frob_\kappa}

\def\Frobs{\Frob_s}

\def\Frobsp{\Frob_{s'}}
\def\FS{F_S}
\def\Fv{F_v}
\def\Gal{\mathrm{Gal}}
\def\GammaE{\Gamma_E}
\def\GammaEtil{\Gamma_{\Etil}}
\def\GEtilab{G_{\Etil}\ab}
\def\GK{{G_K}}
\def\Gk{{G_k}}

\def\GkALp{{G_{\kappa(\ALp)}}}
\def\Gkappa{G_\kappa}
\def\GkLp{{G_{\kLp}}}

\def\GKp{{G_{K'}}}
\def\Gkp{{G_{k'}}}
\def\Gkpo{{G_{\kpo}}}
\def\Gko{{G_{k_0}}}
\def\GkS{{G_{\kappa(S)}}}
\def\Gks{{G_{\kappa(s)}}}

\def\Gksp{{G_{\ksp}}}
\def\Gkt{{G_{\kappa(t)}}}
\def\GL{{G_L}}
\def\GLE{\mathrm{GL}_E}
\def\GLQl{\mathrm{GL}_{\Ql}}
\def\GLQp{\mathrm{GL}_{\Qp}}
\def\GM{{G_M}}
\def\GsE{G_{\sE}}

\def\hi{\hbar^i}
\def\Hicr{H^i_\cr}
\def\Hiet{H^i_\et}
\def\Hom{\mathrm{Hom}}
\def\HoneL{\mathbb{H}^1_L}
\def\IG{I_G}
\def\incl{\hookrightarrow}
\def\inv{^{-1}}
\def\isom{\overset{\sim}{\to}}
\def\iv{\frak{i}_v}
\def\Kbar{\bar{K}}
\def\kbar{\bar{k}}

\def\Ker{\mathrm{Ker}}
\def\kL{k_L}
\def\kLp{k_{L'}}
\def\kM{k_M}

\def\kMab{k_{M,\mathrm{ab}}}

\def\kmu{k_\mu}
\def\ko{{k_0}}
\def\Kpo{K'_0}
\def\kpo{k'_0}
\def\kS{\kappa(S)}
\def\ks{\kappa(s)}

\def\ksp{\kappa(s')}
\def\kt{\kappa(t)}
\def\ktbar{\overline{\kappa}(t)}
\def\l{\lambda}
\def\LCM{\mathrm{LCM}}
\def\LG{\Lambda(G)}
\def\LGam{\Lambda(\Gamma)}
\def\LH{\Lambda(H)}
\def\Lie{\mathrm{Lie}}
\def\limiFp{\underset{\underset{F'}{\longrightarrow}}{\lim}\ }

\def\limp{\underset{\longleftarrow}{\lim}}
\def\limpFp{\underset{\underset{F'}{\longleftarrow}}{\lim}\ }
\def\Lminp{\Lambda\minus\{p\}}

\def\Mab{M_\mathrm{ab}}
\def\minus{\smallsetminus}
\def\mp{\mu_p}
\def\mpinf{\mu_{p^\infty}}
\def\mum{\mu_m}
\def\NEtilQp{N_{\Etil/\Qp}}
\def\NEtilsE{N_{\Etil/\sE}}
\def\ns{n_\s}
\def\nstil{n_{\stil}}
\def\O{\mathcal{O}}
\def\Oi{{\O_1}}
\def\OK{{\O_K}}
\def\ordp{\mathrm{ord}_p}

\def\pinf{p^\infty}

\def\Pl{P_\ell}

\def\Q{\mathbb{Q}}
\def\Ql{{\Q_\ell}}
\def\Qp{{\Q_p}}
\def\rank{\mathrm{rank}}
\def\rankLG{\rank_{\LG}}
\def\rankLGam{\rank_{\LGam}}
\def\rankZp{\rank_{\Zp}}

\def\rl{\rho_\ell}
\def\rlL{(\rl)_{\ell\in\Lambda}}
\def\Rp{R_p}
\def\rp{\rho_p}
\def\rpl{\rho'_\ell}

\def\rplS{\rho'_{\ell,S}}
\def\rpls{\rho'_{\ell,s}}

\def\rpS{\rho'_S}
\def\rps{\rho'_s}
\def\rtil{\tilde{\rho}}
\def\s{\sigma}

\def\Sbad{S_\mathrm{bad}}

\def\sE{\s E}
\def\Selp{\mathrm{Sel}_p}
\def\SFp{S_{F'}}
\def\ShaEF{\Sha_{E/F}}

\def\Sinf{S_\infty}

\def\sLFp{s_{L/F'}}

\def\Sp{S_p}
\def\Spec{\mathrm{Spec}}
\def\Spf{\mathrm{Spf}}

\def\Spss{S_p\ss}
\def\ss{^{\mathrm{ss}}}

\def\stil{\tilde{\sigma}}
\def\surj{\twoheadrightarrow}
\def\t{\tau}
\def\Tl{T_\ell}
\def\Tp{T_p}
\def\TpE{\Tp(E)}
\def\UEtil{U_{\Etil}}
\def\UsE{U_{\sE}}
\def\Vl{V_\ell}
\def\Vp{V_p}
\def\Wl{W_\ell}
\def\Wlbar{\bar{W}_\ell}
\def\X{\mathfrak X}
\def\XKbar{X_{\Kbar}}
\def\Y{\mathcal{Y}}

\def\Yt{\Y_t}
\def\Ytbar{\Y_{\overline{t}}}
\def\Z{\mathbb{Z}}

\def\Zl{\Z_\ell}
\def\Zhat{\widehat{\Z}}
\def\Zp{{\Z_p}}
\def\Zpx{\Z_p^\times}
\begin{document}
\begin{center}
{\huge A generalization of a theorem of Imai

\smallskip
and its applications to Iwasawa theory}

\vskip 1cm
{\large Yusuke Kubo
and 
Yuichiro Taguchi
}
\end{center}

\vskip 1.5cm
\begin{abstract}
\noindent
It is proved that, if   $K$  is a complete discrete valuation field of 
mixed characteristic  $(0,p)$  with residue field satisfying a mild condition,  
then any abelian variety over  $K$  with potentially good reduction
has finite  $K(K^{1/p^\infty})$-rational torsion subgroup.  
This can be used to remove certain conditions assumed in some theorems 
in Iwasawa theory.
\end{abstract}



\section{Introduction}
\label{sec:Intro}
Let  $K$  be a complete discrete valuation field 
of mixed characterisic  $(0,p)$, 
where  $p$  is any prime number. 
The theorem of Imai (\cite{Imai}) mentioned in the title states that, if  
$K$  is a finite extension of the $p$-adic number field  $\Qp$  and  
$A$  is an abelian variety over  $K$  with 
good reduction, then the torsion subgroup of  
$A(K(\mpinf))$  is finite, where  
$\mpinf$  denotes the group of roots of unity 
of $p$-power order in a fixed separable closure  
$\Kbar$  of  $K$.
In this paper, 
we generalize this theorem as follows: 
Let  $M$  be the extension field of  $K$  
obtained by adjoining all $p$-power roots of 
all elements of  $K^\times$  
(so in particular it contains  $\mpinf$  and
is an infinite Kummer extension over  $K(\mpinf)$).
We say that a field is {\it essentially 
of finite type} if it is an algebraic extension 
of finite separable degree over a purely transcendental 
extension of a prime field. 

\begin{theorem}
\label{thm:MT1}
Let  $X$  be a proper smooth variety over  $K$  
with potentially good reduction, and  $i$  an odd integer $\geq 1$. 
Let  
$V:=\Hiet(\XKbar,\Zhat)\otimes_\Z\Q$  and  
$T$  a $\GK$-stable $\Zhat$-lattice in  $V$. 
Assume the residue field  $k$  of  $K$  
is essentially of finite type. 
Then,  
for any subfield  $L$  of  $M$  containing  $K$,  
the $\GL$-fixed subgroup  
$(V/T)^\GL$  of  $V/T$  is finite.
\end{theorem}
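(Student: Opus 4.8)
The strategy is to reduce everything to a statement about crystalline representations and then to use $p$-adic Hodge theory together with a weight argument. Since $X$ has potentially good reduction, after replacing $K$ by a finite extension $K'$ we may assume $X$ has good reduction; I must be careful that $L$ changes to $L' = L\cdot K'$, but $L'$ is still contained in the corresponding $M'$ for $K'$, so the reduction is harmless once I check that finiteness of $(V/T)^{G_{L'}}$ implies finiteness of $(V/T)^{G_L}$ (the latter is a subgroup of the former, so this direction is immediate). Thus I may assume from the start that $X$ has good reduction over $K$, and that $T$ is a $G_K$-stable $\widehat{\Z}$-lattice in $V = H^i_{\et}(X_{\overline K},\widehat\Z)\otimes\Q$. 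Decomposing $\widehat\Z = \prod_\ell \Z_\ell$, it suffices to treat each $\ell$-adic component $V_\ell = H^i_{\et}(X_{\overline K},\Q_\ell)$ with its lattice $T_\ell$, and to show $(V_\ell/T_\ell)^{G_L}$ is finite for all $\ell$ and vanishes for all but finitely many $\ell$.

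**The prime-to-$p$ part.** For $\ell\neq p$, the representation $V_\ell$ is unramified (good reduction, smooth proper, $\ell\neq p$), so $G_K$ acts through $G_k$, and the geometric Frobenius acts with eigenvalues that are Weil numbers of weight $i$. Here I would invoke the hypothesis that $k$ is essentially of finite type: this is exactly what guarantees that $G_k$ has a well-behaved Frobenius and that "unramified $+$ positive weight" forces the fixed space to be small. Concretely, $L\subseteq M$ and $M/K$ is obtained by adjoining $p$-power roots, which is a pro-$p$ (wildly ramified, for $\ell\neq p$ trivial on inertia) extension together with roots of unity; so the image of $G_L$ in $G_k$ still contains an open subgroup where some power of Frobenius acts, and the eigenvalues of that Frobenius on $V_\ell$ being Weil numbers of nonzero weight forces $(V_\ell/T_\ell)^{G_L}$ to be finite, and trivial once $\ell$ exceeds a bound depending only on the weight and $\dim V$. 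I expect this part to be essentially classical (an Imai-type / Weil-bound argument), handled by a lemma stated earlier in the paper.

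**The $p$-adic part.** This is the heart of the matter and where the "generalization of Imai's theorem" is really needed: $M$ contains $K(K^{1/p^\infty})$, which is a deeply ramified, infinite non-abelian-type Kummer extension, so the usual Imai argument (designed for $K(\mu_{p^\infty})$) does not directly apply. Here $V_p = H^i_{\et}(X_{\overline K},\Q_p)$ is crystalline (good reduction), and by $p$-adic Hodge theory its Hodge–Tate weights lie in $\{0,1,\dots,i\}$ with $i$ odd $\geq 1$, so the Hodge–Tate weights are not all equal. The plan is: if $(V_p/T_p)^{G_L}$ were infinite, then $V_p$ would contain a nonzero $G_L$-fixed subquotient, hence (after passing to $L$) a piece on which $G_L$ acts trivially; one then studies the restriction of the crystalline representation to $G_L$ and derives a contradiction with the Hodge–Tate weight structure. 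The key input must be the main theorem of the paper about abelian varieties / crystalline representations over $M$ (the "generalization of Imai"): it should say that a crystalline (or de Rham) $\Q_p$-representation of $G_K$ with some nonzero Hodge–Tate weight has only finite $G_L$-torsion for $L\subseteq M$. I would deduce Theorem~\ref{thm:MT1} from that by noting $i$ odd $\Rightarrow$ $V_p$ has a nonzero Hodge–Tate weight (indeed, by Poincaré duality the weights are symmetric about $i/2$, which is not an integer, so they cannot all be $0$).

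**Main obstacle.** The serious difficulty is the $p$-adic case over the infinite Kummer extension $M$: one cannot simply invoke Sen theory or Tate's theorem over $K(\mu_{p^\infty})$ because $M$ is much larger. The decisive step is controlling $H^0(G_L, V_p/T_p)$ when $L$ ranges over all intermediate fields of $M/K$; this requires the paper's generalization of Imai's theorem, presumably proved via integral $p$-adic Hodge theory (Breuil–Kisin modules, or the theory of $(\varphi,\widehat G)$-modules, or Faltings' almost-étale / perfectoid techniques adapted to the Kummer tower) to show that crystalline representations of positive Hodge–Tate weight have no nontrivial $G_M$-fixed torsion. Once that is in hand, assembling the $\ell$-adic pieces into the statement for $\widehat\Z$-coefficients is routine bookkeeping: finiteness at each $\ell$ plus vanishing for almost all $\ell$ gives finiteness of $(V/T)^{G_L}$.
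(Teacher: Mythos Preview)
Your overall architecture matches the paper: reduce to good reduction, decompose $V=\prod_\ell V_\ell$, and show $(V_\ell/T_\ell)^{G_L}$ is finite for every $\ell$ and zero for almost all $\ell$. For $\ell\neq p$ your sketch is in the right spirit, though since $k$ is only \emph{essentially} of finite type there is no Frobenius in $G_k$ to invoke directly; the paper specializes the (semisimplified, unramified) representation to a closed point of a finite-type model of $k$ and reads Weil-number eigenvalues there, after first using the structure of $\Gal(M/K)$ to bound the relevant residue extension.

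The genuine gap is in the $p$-adic part, where you have misidentified the controlling invariant. The input you conjecture---``crystalline with \emph{some} nonzero Hodge--Tate weight implies $(V_p/T_p)^{G_L}$ finite''---is false: already $V_p=\Q_p\oplus\Q_p(1)$, or $H^1$ of an elliptic curve (Hodge--Tate weights $\{0,1\}$), has the trivial line $\Q_p$ fixed by $G_M$, so $(V_p/T_p)^{G_M}$ is infinite. Hodge--Tate weight $0$ \emph{does} occur in $H^i_{\mathrm{et}}(X_{\bar K},\Q_p)$ whenever $h^{0,i}(X)\neq 0$, so Hodge--Tate weights alone cannot rule out a $G_M$-fixed line. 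The paper's criterion concerns \emph{Frobenius} (Weil) weights, read off from $D_{\mathrm{st}}(V_p)$ after specialization to a finite field: $H^i$ is pure of Frobenius weight $i$, and it is the oddness of $i$ in \emph{that} sense which is used.

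The paper's $p$-adic argument is also far more elementary than the integral $p$-adic Hodge theory you anticipate. With $G=\Gal(M/K)$ and $H=\Gal(M/K(\mu_{p^\infty}))$, the conjugation relation $\sigma\tau\sigma^{-1}=\tau^{\chi(\sigma)}$ forces every $\tau\in H$ to have only $p$-power roots of unity as eigenvalues on any finite-dimensional representation; hence after a finite base change $H$ acts unipotently on $W:=V_p^{G_M}$. The semisimplification of $W$ is then a de Rham representation of the abelian group $G/H=\Gal(K(\mu_{p^\infty})/K)$, and a classification of such representations (via Serre's theory of locally algebraic characters) shows it is, after a further finite extension, a sum of Tate twists $\Q_p(r)$, each of Frobenius weight $-2r$. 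This contradicts the pure odd Frobenius weight of $V_p$, so $W=0$. No Breuil--Kisin, $(\varphi,\widehat G)$, or perfectoid machinery enters; the decisive step is this group-theoretic reduction from $G_M$-invariants to representations of the cyclotomic quotient.
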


Here and elsewhere, 
a {\it variety} over  $K$  means a separated scheme 
of finite type over  $K$, 
and  $\XKbar$  denotes the base extension  
$X\otimes_K\Kbar$  of  $X$  to a separable closure  $\Kbar$  of  $K$. 
For any field  $K$, we denote by  
$\GK=\Gal(\Kbar/K)$  the absolute Galois group of  $K$.
A typical example of  $L$  as in the theorem is  
$L=K(\mpinf,p^{1/p^\infty})=\cup_{n\geq 1}
   K(\mpinf,p^{1/p^n})$, 
which is often referred to as a ``false Tate extension'' and 
studied in recent Iwasawa theory.
In the case where  $X$  is an abelian variety, 
$i=1$, and  $L=K(\mpinf)$, we recover Imai's theorem.
We should also mention that there is another direction of generalization, 
due to Coates, Sujatha and Wintenberger (\cite{CSW}), 
to the case of higher cohomology groups (rather than our  $H^0(M,V)=V^\GM$)
of a general  $X$  (but with ``classical''  $K$  and  $L=K(\mpinf)$).
There can be other types of extensions  $L/K$  for which  
$(V/T)^\GL$  is finite; 
see \cite{Ozeki} for instance 
(cf.\ also \cite{Wingberg} and \cite{Zarhin} in the global case).

In the Theorem, the assumption that  $i$  be odd is essential. 
If  $i=2j$  is even and  
$X$  is defined over an algebraic number field  $k$  
contained in  $K$, then (part of) the Tate conjecture asserts 
that the cycle map 
$\CH^j(X/k)_\Qp\isom H^{2j}_\et(\XKbar,\Qp(j))^\GK$ 
is injective, and hence the $p$-part of   
$(V/T)^\GL$  has corank  $\geq\dim_\Q(\CH^j(X/k)_\Q)$  
if  $L\supset K(\mpinf)$. 
The assumption of potentially good reduction 
is also essential, because otherwise 
$\Hiet(\XKbar,\Qp)$  can well have subquotients 
of even weights no matter how  $i$  is odd.
If the residue field  $k$  is too large, then  
$\Hiet(\XKbar,\Ql)^\GL$  may be non-zero for  $\ell\not=p$; 
for example, 
if  $X$  has good reduction with special fiber  $Y$  and  
$k$  is separably closed, this space ``is''  
$\Hiet(Y_{\kbar},\Ql)$.  

To proved the theorem, it is enough to prove 
the case  $L=M$. 
Let  $V$  and  $T$  be as in the theorem and, 
for each prime number  $\ell$, let  
$\Vl$  and  $\Tl$  be their $\ell$-th components, 
respectively.
We prove the theorem by showing:

\sn
(1) 
$(\Vp/\Tp)^\GM$  is finite;

\sn
(2)
$(\Vl/\Tl)^\GM$  is finite for all  $\ell\not=p$;

\sn
(3)
$(\Vl/\Tl)^\GM=0$  for almost all  $\ell\not=p$.

\sn
In fact,         the assumption of oddness           is needed 
only for (1) and the assumption on the residue field is needed 
only for (2) and (3). 
In each case, 
we use a kind of weight arguments. 
if the residue field  $k$  is finite, 
the proof becomes fairly simple. 
The general case is proved essentially by reducing to this case 
via ``specialization''.

After some preliminaries in Section 2, we 
prove (1) in Section 3 and 
prove (2) and (3) in Section 4.
Actually, each part can be proved 
in a slightly more general setting.
The more general and precise version of 
Theorem \ref{thm:MT1} is the following 
(For the terminologies 
``geometric'', ``weights'' and ``somewhere'', 
see Definition \ref{def:system}): 

\begin{theorem}
\label{thm:MT2}
(i) 
If  $\rp:\GK\to\GLQp(\Vp)$  is a geometric $p$-adic representation 
of odd weights somewhere and  
$\Tp$  is a $\GK$-stable $\Zp$-lattice in  $\Vp$, then
$(\Vp/\Tp)^\GL$  is finite for any subfield  $L$  of  $M$  containing  $K$.

\sn
(ii)
Let  $\ell\not=p$. Assume the residue field  $k$  of  $K$  
is essentially of finite type. If   
$\rl:\GK\to\GLQl(\Vl)$  is a geometric $\ell$-adic representation 
of non-zero weights somewhere and  
$\Tl$  is a $\GK$-stable $\Zl$-lattice in  $\Vl$, then
$(\Vl/\Tl)^\GL$  is finite for any subfield  $L$  of  $M$  containing  $K$.

\sn
(iii) 
Assume the residue field  $k$  of  $K$  
is essentially of finite type. 
Let  $\rlL$  be a geometric system of $\ell$-adic 
representations of non-zero weights somewhere, where  
$\Lambda$  is a set of prime numbers.
Let  $V=\prod_{\ell\in\Lambda}\Vl$  be the direct product 
of the representations spaces  $\Vl$  of  $\rl$, and 
let  $T=\prod_{\ell\in\Lambda}\Tl$  be a 
$\GK$-stable $\Zhat$-lattice in  $V$.
Then, for any subfield  $L$  of  $M$  containing  $K$,  we have:
\\
\quad
(iii-1) \ 
$(\Vl/\Tl)^\GL=0$  for almost all  $\ell\not=p$.
\\
\quad
(iii-2) \ 
Assume  $\rp$  is of odd weights somewhere if  $p\in\Lambda$. Then  
$(V/T)^\GL$  is finite.
\end{theorem}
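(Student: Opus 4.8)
The plan is to prove the three parts (i), (ii), (iii-1), (iii-2) essentially independently, reducing in each case to the situation of a finite residue field by a specialization argument, and then invoking a weight argument at the level of the special fiber. Throughout, as noted in the introduction, it suffices to treat the case $L=M$, since $(V/T)^\GM \supset (V/T)^\GL$ for any intermediate $L$, so finiteness (resp.\ vanishing) for $M$ gives the same for $L$.

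For part (i), I would first treat the case where the residue field $k$ is finite. Here one uses $p$-adic Hodge theory: a geometric $p$-adic representation $\rp$ is potentially semistable, and the hypothesis of odd weights somewhere controls the Hodge--Tate weights and the eigenvalues of Frobenius on the associated $\Dst$. The key point is that $M/K$ is built from the cyclotomic extension $K(\mpinf)$ together with Kummer extensions $K(a^{1/p^\infty})$ for $a\in K^\times$; one analyzes how $\GM$ acts, using that $M$ is essentially the maximal extension killed by the cyclotomic character on a suitable quotient, and shows via Sen theory / the theory of $(\varphi,\Gamma)$-modules that a nonzero $\GM$-fixed vector in $\Vp/\Tp$ would force a Hodge--Tate weight $0$ and a Frobenius eigenvalue that is a root of unity on the corresponding graded piece, contradicting oddness of the weight. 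For general essentially-of-finite-type $k$, I would spread $X$ (or the representation) out over a base of finite type over $\Zp$ or over a DVR with finite residue field, specialize to a closed point whose residue field is finite, and check that $\GM$-invariants can only grow under specialization of the geometric fiber — so finiteness at the special point implies finiteness at the generic point.

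For parts (ii) and (iii-1) — the case $\ell\neq p$ — the same specialization strategy applies, and this is where the hypothesis that $k$ be essentially of finite type is actually used. Over a finite residue field, $\GK$ surjects onto $\Gk\cong\widehat{\Z}$ via the tame (indeed unramified) quotient, and Frobenius acts on $\Vl$ with eigenvalues that are Weil numbers of the relevant non-zero weights (using the Weil conjectures, via the geometricity/purity built into Definition \ref{def:system}). The crucial observation is that $M/K$ is \emph{pro-$p$} over the maximal unramified-and-tame part, so $\Gal(\kbar/k)$-coinvariants are essentially unchanged: $(\Vl/\Tl)^\GM$ is controlled by $(\Vl/\Tl)^{\text{geometric part}}$ together with the action of an arithmetic Frobenius, and non-zero weight forces $1$ not to be an eigenvalue of Frobenius, giving $(\Vl/\Tl)^\GM=0$ for all but finitely many $\ell$ (those $\ell$ for which the Weil number is congruent to $1$ modulo $\ell$, of which there are only finitely many — this is (iii-1)) and finiteness for the rest (this is (ii)). Part (iii-2) then follows by combining: $(V/T)^\GM = \prod_\ell (\Vl/\Tl)^\GM$, the factor at $p$ is finite by (i), almost all $\ell\neq p$ factors vanish by (iii-1), and the remaining finitely many $\ell\neq p$ factors are finite by (ii), so the product is finite.

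The main obstacle, I expect, is making the specialization argument rigorous and compatible with the extension $M/K$: one must choose a model over a finitely generated base, define an analogue of $M$ at the special fiber, and verify that taking $\GM$-invariants of a lattice in a $p$-adic (resp.\ $\ell$-adic) representation is well-behaved under specialization — in particular that invariants do not shrink when passing from the generic to a special point. For $\ell\neq p$ this should be manageable via smooth/proper base change and the congruence-of-eigenvalues bookkeeping; for $\ell=p$ it is more delicate because $p$-adic Hodge theory is less robust under specialization, and one likely needs to work with the full force of potential semistability together with a careful bound on Hodge--Tate weights that is uniform over the base. A secondary subtlety is the precise definition of ``geometric'', ``weights'', and ``somewhere'' in Definition \ref{def:system}, which is what guarantees the purity input; the weight arguments in each of (1), (2), (3) are then formally similar but carried out in the appropriate ($p$-adic Hodge-theoretic, resp.\ $\ell$-adic étale) framework.
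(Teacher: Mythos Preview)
Your overall architecture for parts (ii), (iii-1), (iii-2) --- specialize to a closed point with finite residue field, then use that Frobenius eigenvalues are Weil numbers of non-zero weight --- matches the paper's, and your combination argument for (iii-2) is exactly right. However, you are missing the paper's central structural input, Lemma~\ref{lem:key}: the conjugation relation $\sigma\tau\sigma^{-1}=\tau^{\chi(\sigma)}$ in $G=\Gal(M/K)$ forces the eigenvalues of every $\tau\in H=\Gal(M/K(\mpinf))$ on $V$ to be $p$-power roots of unity of order dividing a fixed integer $m$ depending only on $K$ and $\dim V$. This is what lets the paper replace $M$ by the intermediate field $M^{H^m}$, whose residue field is abelian over $k_\mu$ of exponent dividing $m$; after specialization to a closed point $s'$ above $s$ one then knows $[\kappa(s'):\kappa(s)]\mid m$, so there is a genuine Frobenius of \emph{uniformly bounded} degree to which Lemma~\ref{lem:weight} applies. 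Your vaguer assertion that ``$M/K$ is pro-$p$ over the maximal unramified-and-tame part'' and that ``$\Gal(\kbar/k)$-coinvariants are essentially unchanged'' does not substitute for this: without the uniform $m$ you have no bound on the residue degree at the specialized point, and in particular no way to get the uniformity in $\ell$ required for (iii-1). Your stated ``main obstacle'' --- defining an analogue of $M$ at the special fiber and checking invariants behave under specialization --- is a red herring; the paper never does this, because Lemma~\ref{lem:key} handles the group $\Gal(M/K)$ directly.

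Your approach to part (i) diverges substantially from the paper's and, as you yourself flag, runs into real trouble with specializing $p$-adic Hodge theory. The paper does \emph{not} reduce to finite residue field for the $p$-adic part. Instead it passes to an admissible extension with perfect residue field, invokes Lemma~\ref{lem:key}(ii) so that (after a further finite extension) $H$ acts unipotently on $W:=V_p^{\GM}$, and then observes that the semisimplification of $W$ is a de~Rham representation factoring through $\Gal(K(\mpinf)/K)$. A separate lemma --- proved via Serre's classification of abelian Hodge--Tate characters using Lubin--Tate theory --- shows that any such representation becomes a sum of Tate twists $\Qp(r_i)$ after a finite extension, hence has even weights; this contradicts the odd-weight hypothesis unless $W=0$. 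So the $p$-adic case is again powered by Lemma~\ref{lem:key} together with a classification of one-dimensional de~Rham representations, not by Sen theory, $(\varphi,\Gamma)$-modules, or specialization.
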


Theorem \ref{thm:MT1} follows from 
Theorem \ref{thm:MT2} immediately, upon noticing 
standard facts on $p$-adic and $\ell$-adic cohomologies 
(\S\ref{sec:cohomology}). 

As the original version of Imai's theorem did, 
our theorems have applications in Iwasawa Theory, 
some of which are worked out in Section \ref{sec:application}.  
Here we use only the ``$p$-part''  of the theorems.
Our extension  $M/K$  is in some sense a maximal one among 
the simplest non-abelian extensions as considered in 
the recent study of non-commutative Iwasawa theory.
The simple structure of  $\Gal(M/K)$  is exploited in the proofs of 
Lemmas \ref{lem:key}, \ref{lem:residue} and Theorem \ref{thm:MT2} (end of \S 4).


\mn
{\it Convention.} 
In the proofs, we often replace the base field  $K$  
by a finite extension. 
If  (P$_K$)  is a proposition with base field  $K$, we say that 
``(P$_K$)  holds after a finite extension of  $K$''  
if there exists a finite extension  $K'/K$  such that  
(P$_{K'}$)  holds.

\mn
{\it Acknowledgments.} 
We thank Seidai Yasuda who communicated to us 
something to the effect of Lemma \ref{lem:key}. 
We thank also Yukiyoshi Nakkajima for explaining us his results 
(especially Proposition \ref{prop:crys} below) and giving us 
some comments on Theorem \ref{thm:MT1}. 
Thanks are also due to Yoshitaka Hachimori for explaining 
us some of his results in Iwasawa theory. 
We thank also Tadashi Ochiai for many comments on an
earlier version of this paper. 
We are grateful to John Coates who, 
after our finishing an earlier version of this paper, 
informed us of the paper \cite{CSW}, 
which we had not been aware of. 
Part of this paper was written while the second-named author 
was staying at the Korea Institute for Advanced Study, where 
he enjoyed a sabbatical leave from Kyushu University; 
he thanks KIAS and Youn-seo Choi for their hospitality, and 
the Faculty of Mathematics of K.U.\ for its generosity.
The second-named author is supported by 
JSPS KAKENHI 22540024.

\section{Preliminaries}
\label{sec:pre}

In this subsection, 
we prove some elementary lemmas which are used 
in the proof of Theorem \ref{thm:MT2} and 
in the applications in Section \ref{sec:application}.
The first one is already noted in \cite{Imai}:

\begin{lemma}
\label{lem:VG}
Let  $G$  be a group, 
$\rho:G\to\GLQl(V)$  
a $\Ql$-linear representation of  $G$, and  
$T$  a $G$-stable $\Zl$-lattice in  $V$. 
Then the following two conditions are equivalent:

\sn
(i)
$V^G\not=0$;

\sn
(ii)
$(V/T)^G$  is infinite.
\end{lemma}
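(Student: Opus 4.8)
The plan is to prove the equivalence by showing each implication, with the non-trivial direction being (i)$\Rightarrow$(ii).

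\medskip

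For (ii)$\Rightarrow$(i), I would argue by contraposition: suppose $V^G=0$. The lattice $T$ is a free $\Zl$-module of finite rank, say $n$, and for each $m\geq 1$ the $p$-power torsion subgroup $(\frac{1}{\ell^m}T/T)$ is finite. An element of $(V/T)^G$ is represented by some $x\in V$ with $(g-1)x\in T$ for all $g\in G$; multiplying by a suitable power of $\ell$ we may assume $x\in\frac{1}{\ell^m}T$ for some $m$. If $(V/T)^G$ were infinite, then since each $(\frac{1}{\ell^m}T/T)^G$ is finite, the exponents $m$ would be unbounded. By a compactness or rank argument, one extracts from a sequence of such classes an element $v\in V$, $v\neq 0$, fixed by $G$ — concretely, the $\Ql$-span of the preimages inside $V$ of $(V/T)^G$ is a $G$-stable $\Ql$-subspace, and if $(V/T)^G$ is infinite this subspace is nonzero and consists of $G$-fixed vectors after passing to the limit, contradicting $V^G=0$. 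Cleaner: the $\Zl$-module $\{x\in V : (g-1)x\in T \text{ for all }g\in G\}$ is a $G$-stable lattice containing $T$, and its image in $V/T$ is exactly $(V/T)^G$; this module is finitely generated over $\Zl$ iff $(V/T)^G$ is finite, and it is finitely generated iff it spans no more than a lattice, which forces $V^G$ to contain the $\Ql$-span of any non-finitely-generated part.

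\medskip

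For the main direction (i)$\Rightarrow$(ii): if $V^G\neq 0$, pick $0\neq v\in V^G$. After scaling we may assume $v\in T$ but $v\notin\ell T$ (i.e. $v$ is primitive in $T$), or more simply just take $v\in V^G\cap T$, $v\neq 0$. Then for every $m\geq 1$ the element $\ell^{-m}v\in V$ satisfies $(g-1)(\ell^{-m}v)=\ell^{-m}(g-1)v=0\in T$, so its class in $V/T$ lies in $(V/T)^G$. These classes $\ell^{-m}v \bmod T$ for $m=1,2,3,\dots$ are pairwise distinct: if $\ell^{-m}v\equiv\ell^{-m'}v\pmod T$ with $m>m'$, then $\ell^{-m}(1-\ell^{m-m'})v\in T$, and since $1-\ell^{m-m'}$ is a unit in $\Zl$ this gives $\ell^{-m}v\in T$, i.e. $v\in\ell^m T$; but $v$ is nonzero and $T$ is $\ell$-adically separated, so this fails for $m$ large. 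Hence $(V/T)^G$ contains infinitely many elements.

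\medskip

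The only mild subtlety — and the step I would be most careful about — is the (ii)$\Rightarrow$(i) direction, where one must rule out the possibility that $(V/T)^G$ is infinite while still "using up" only torsion of the lattice. The clean device is to set $T' := \{x\in V : (g-1)x\in T\ \forall g\in G\}$, observe it is a $G$-stable $\Zl$-submodule of $V$ with $T\subseteq T'$ and $T'/T\cong (V/T)^G$, and then note that $T'$ is $\Zl$-torsion-free (it sits inside $V$); so $T'/T$ infinite $\iff$ $T'$ is not finitely generated over $\Zl$ $\iff$ $T'\otimes_{\Zl}\Ql$ has dimension exceeding $\rank_{\Zl}T = \dim_\Ql V$ is impossible — rather, $T'$ not finitely generated forces $T'$ to contain $\ell^{-m}w$ for arbitrarily large $m$ for some fixed primitive $w$, whence $w\in V^G$ after checking $(g-1)w = \ell^m(g-1)(\ell^{-m}w)\in \ell^m T$ for all $m$, so $(g-1)w=0$. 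This pins down $V^G\neq 0$ and completes the equivalence.
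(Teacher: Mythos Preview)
Your argument is essentially correct, but the exposition is disorganized and you have the labels backwards: (i)$\Rightarrow$(ii) is the \emph{easy} direction (your proof of it is fine and is exactly what the paper dismisses as ``obvious''), while (ii)$\Rightarrow$(i) is where the content lies. Your first two passes at (ii)$\Rightarrow$(i) are vague or trail off; the real argument is only in your final paragraph.

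That final paragraph has the right idea, via the auxiliary module $T'=\{x\in V:(g-1)x\in T\ \text{for all }g\in G\}$ with $T'/T=(V/T)^G$. The one step you assert without proof is that if $T'$ is not finitely generated over $\Zl$, then it contains $\ell^{-m}w$ for all $m\geq 0$ for some fixed nonzero $w\in T$. This is true, but it is precisely where compactness enters and deserves justification: either invoke the structure of subgroups of $(\Ql/\Zl)^n$ (any infinite one contains a copy of $\Ql/\Zl$) and then lift a coherent system of $\ell^m$-torsion generators using completeness of $T$, or argue directly. Once you have such a $w$, your conclusion $(g-1)w\in\bigcap_m\ell^mT=0$ is correct.

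The paper's proof of (ii)$\Rightarrow$(i) is organized differently and is more direct: it observes that if $(V/T)^G$ is infinite then $(T/\ell^nT)^G\smallsetminus(\ell T/\ell^nT)^G$ is nonempty for every $n$ (under the $G$-equivariant identification $T/\ell^nT\cong(V/T)[\ell^n]$, these are the $G$-fixed elements of exact order $\ell^n$), and then takes the projective limit of these nonempty finite sets to produce an element of $T^G\smallsetminus\ell T^G$. Both routes ultimately rest on the same compactness of $\Zl$; the paper's packaging avoids the auxiliary $T'$ and any appeal to the structure of subgroups of $(\Ql/\Zl)^n$, while yours makes the mechanism (a genuine $\Ql$-line of invariants hiding inside $T'$) more visible.
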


\begin{proof}
The implication  
(i) $\Rightarrow$ (ii)  is obvious. 
Conversely, suppose  $(V/T)^G$  is infinite. 
Then for any  $n\geq 1$, the set 
$$
  \left(     T/\ell^nT\right)^G\ \minus\ 
  \left(\ell T/\ell^nT\right)^G
$$
is non-empty. 
These sets form a projective system, and its limit  
$T^G\minus\ell T^G$  is also non-empty, 
being the projective limit of non-empty compact sets 
(\cite{CA}, Chap.\ 2, Sect.\ 2, Lem). 
Hence  $V^G\not=0$.
\end{proof}

Let  $M$  be, as in the Introduction, 
the extension field of  $K$  
obtained by adjoining all $p$-power roots of 
all elements of  $K^\times$. 
Set  
$G:=\Gal(M/K)$  and  
$H:=\Gal(M/K(\mpinf))$. 
Then  $H$  is an abelian normal subgroup of  $G$  
and the $p$-adic cyclotomic character  
$\chi:G\to\Zpx$  identifies  
$G/H$  with a subgroup of  $\Zpx$. 
We can check easily that 
\begin{equation}\label{eq:key}
  \s\t\s\inv\ =\ \t^{\chi(\s)}
\end{equation}
for all  $\s\in G$  and  $\t\in H$. 
Let  $E$  be a topological field, and consider 
continuous $E$-linear representations  
$\rho:G\to\GLE(V)$  of  $G$  on 
finite-dimensional $E$-vector spaces  $V$ 
For an integer  $m\geq 1$, 
let  $\mum$  denote the group of $m$-th roots of unity 
(in a suitable separably closed field depending on the context). 

\begin{lemma}
\label{lem:key}
(i) 
Fix an integer  $n\geq 1$. 
Then there exists an integer  $m$, which is a power of  
$p$  and depends only on  $K$  and  $n$, such that, 
for any representations  $V$  as above of dimension  $n$, 
$H^m$  acts unipotently on  $V$. 

\sn
(ii) 
Fix a  $V$  as above. 
Then after a finite extension  $K'/K$, 
the subgroup  $H$  acts unipotently on  $V$.
\end{lemma}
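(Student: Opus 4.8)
The plan is to exploit the commutation relation \eqref{eq:key}, which says that conjugation by $\s\in G$ sends $\t\in H$ to $\t^{\chi(\s)}$. Fix a continuous $E$-linear representation $\rho:G\to\GLE(V)$ with $\dim_E V=n$, and fix any $\t\in H$. For $\s\in G$ we then have $\rho(\s)\rho(\t)\rho(\s)\inv=\rho(\t)^{\chi(\s)}$, so the operators $\rho(\t)$ and $\rho(\t)^{\chi(\s)}$ are conjugate and hence have the same characteristic polynomial; in particular they have the same set of eigenvalues (in a fixed algebraic closure of $E$, or after passing to a suitable extension). Thus if $\l$ is an eigenvalue of $\rho(\t)$, so is $\l^{\chi(\s)}$ for every $\s\in G$. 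Since $G/H\hookrightarrow\Zpx$ via $\chi$ and $\chi(G)$ is an infinite (in fact open, after possibly shrinking) subgroup of $\Zpx$, the multiplicative group generated by $\{\chi(\s):\s\in G\}$ inside $\Zpx$ is infinite. The key point is then elementary: an algebraic number $\l$ all of whose powers $\l^a$, $a$ ranging over an infinite subset of $\Z_{>0}$ (or of $\Zpx$), are roots of a single degree-$n$ polynomial, must be a root of unity of bounded order. Indeed the Galois conjugates of $\l$ over $\Q$ number at most $n$, and if $\l$ were not a root of unity then either some conjugate has absolute value $\neq 1$ (and then $|\l^a|$ takes infinitely many values, contradiction) or all conjugates lie on the unit circle but $\l$ is not a root of unity, which by a Kronecker-type argument still forces infinitely many distinct values $\l^a$ among polynomials of bounded degree — contradiction. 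A quantitative version of this gives a bound on the order of $\l$ depending only on $n$ (and on $p$, via the constraint that the relevant exponents lie in $\Zpx$); hence there is $m=p^r$, depending only on $K$ and $n$, with $\l^m=1$ for every eigenvalue $\l$ of every $\rho(\t)$, $\t\in H$.

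From here I would argue as follows. The relation $\rho(\s)\rho(\t)\rho(\s)\inv=\rho(\t)^{\chi(\s)}$, applied with $\s$ replaced by suitable elements, shows more: $\rho(\t^p)$ is conjugate to $\rho(\t)^{p\cdot\chi(\s)}$, etc., and combined with the fact that $H$ is abelian (so all $\rho(\t)$, $\t\in H$, are simultaneously triangularizable over a finite extension of $E$), one sees that on each graded piece of a common triangularization, $H$ acts through a character $\psi:H\to\bar E^\times$ with $\psi(\t)$ a root of unity of order dividing $m$ for all $\t\in H$; moreover $\psi(\t^{\chi(\s)})=\psi(\t)$ forces $\psi(\t)^{\chi(\s)-1}=1$, and letting $\s$ vary over $G$ (so $\chi(\s)-1$ ranges over a set of $p$-adic integers with no common factor beyond a bounded power of $p$) pins down the order of $\psi(\t)$ to divide a fixed power of $p$ depending only on $K$. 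Replacing $V$ by $V^{H^m}$-type considerations, or simply noting that $\t\mapsto\rho(\t)$ for $\t\in H^m$ now has all eigenvalues equal to $1$, we conclude that $H^m$ acts unipotently on $V$. This proves (i).

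For (ii): apply (i) to the given $V$ to get such an $m=p^r$. The subgroup $H^m$ is open in $H$ (since $H$ is a $p$-adic analytic group, or more simply because $H$ is abelian pro-$p$ up to finite index and $m$ is a $p$-power), and it corresponds to a finite subextension; more precisely, $H^m$ is normal in $G$ and the fixed field $K':=M^{N}$ for an appropriate open normal subgroup $N$ of $G$ with $N\cap H\subseteq H^m$ is a finite extension of $K$ contained in $M$, with $\Gal(M/K')\subseteq$ (a group whose intersection with $H$ acts unipotently on $V$). Then $\GKp$ acts on $V$ with $H\cap\GKp=\Gal(M/K'(\mpinf))$ acting unipotently, which after renaming is exactly the statement that $H$ (for the base field $K'$) acts unipotently. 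One must check that $K'$ is still of the form considered (a complete DVF of mixed characteristic with the same residue-field hypotheses) and that $M$ for $K'$ is the same as $M$ for $K$ — this holds because adjoining $p$-power roots of all elements of $(K')^\times$ to $K'$ gives $M$ again, since $K'\subseteq M$ and $M/K'$ is already closed under taking $p$-power roots.

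The main obstacle I anticipate is the \emph{uniformity} in part (i): obtaining a bound $m$ on the order of the relevant roots of unity that depends only on $K$ and $n$, and not on $V$. The Kronecker-type input (an algebraic integer of degree $\leq n$ all of whose conjugates have absolute value $1$ is a root of unity) is classical, but one needs the extra structure — that the exponents $\chi(\s)$ lie in $\Zpx$ and generate a subgroup whose ``size'' is controlled by $K$ alone (via the ramification of $K(\mpinf)/K$, i.e.\ how large $\chi(G)\subseteq\Zpx$ is) — to bound the order as a power of $p$ uniformly. Making this bound effective and genuinely independent of $V$ is the delicate step; everything else is routine linear algebra (simultaneous triangularization of the abelian group $\rho(H)$) and standard $p$-adic group theory.
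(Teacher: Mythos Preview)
For (i), your setup via the relation (\ref{eq:key}) is correct, but the justification through a Kronecker-type theorem is misplaced: the eigenvalues of $\rho(\t)$ lie in an algebraic closure of the \emph{arbitrary} topological field $E$, so ``Galois conjugates over $\Q$'' and archimedean absolute values make no sense here. The paper's argument is purely combinatorial and delivers in one stroke the uniform bound you flag as the ``main obstacle''. Fix a single $\s\in G$ with $\chi(\s)=1+p^e$ (possible since $\chi(G)$ is open in $\Zpx$). Then $\l\mapsto\l^{1+p^e}$ permutes the set $\{\l_1,\dots,\l_n\}$ of eigenvalues of $\rho(\t)$, so by pigeonhole each $\l_i$ satisfies $\l_i^{(1+p^e)^{r_i}}=\l_i$ for some $1\le r_i\le n$, whence $\l_i^{m'}=1$ with $m'=\LCM\{(1+p^e)^r-1:1\le r\le n\}$. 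Since $\t$ lies in a pro-$p$ group, any eigenvalue that is a root of unity has $p$-power order, so the $p$-part $m$ of $m'$ already works; it visibly depends only on $e$ (hence on $K$) and on $n$.

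Your argument for (ii) has a genuine gap. You assert that $H^m$ is open in $H$ because ``$H$ is a $p$-adic analytic group'', but no hypothesis on the residue field $k$ is in force here, and when $k$ is infinite the group $H$ is typically not topologically finitely generated (hence not $p$-adic analytic), so $H^m$ has infinite index. The lemma is in fact applied later (e.g.\ in the proof of Proposition~\ref{prop:p-adic}) after passing to an admissible extension with perfect, generally infinite, residue field, so this case cannot be sidestepped. The paper bypasses the issue entirely: once (i) is known, the semisimplification of $\rho|_H$ is, after a scalar extension, a direct sum of at most $n$ characters $H\to\mum$, and therefore has \emph{finite image} regardless of the size of $H/H^m$. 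The kernel corresponds to a finite abelian extension of $K(\mpinf)$, which is contained in $K'(\mpinf)$ for some finite $K'/K$; over $K'$ the full $H$ acts unipotently on $V$.
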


\begin{proof}
Let  $\t$  be any element of  $H$  and 
$\l_1$, ..., $\l_n$  ($n=\dim_E(V)$)  
the eigenvalues of  $\rho(\t)$. 
By the relation (\ref{eq:key}), 
we have
$$
  \{\l_1           ,\ldots,\l_n\}\ =\ 
  \{\l_1^{\chi(\s)},\ldots,\l_n^{\chi(\s)}\}
$$
for all  $\s\in G$.
In particular, we have  
$\l_i^{\chi(G)}\subset\{\l_1,\ldots,\l_n\}$  
for each  $i$. 
Since  $\chi(G)$  is open in  $\Zpx$, 
there is an integer  $e\geq 1$  such that  
$1+p^e\in\chi(G)$. 
For each  $i=1,...,n$, 
there is an  $r_i$ ($1\leq r_i\leq n$) such that  
$\l_i^{(1+p^e)^{r_i}}=\l_i$. 
Let 
\begin{equation}\label{eq:m}
  m'\ =\ \LCM\{(1+p^e)^r-1|\ 1\leq r\leq n\}.
\end{equation}
Then we have  $\l_i^{m'}=1$  for all  $i$. 
Note that this  $m'$  depends only on 
$\chi(G)$  (or the group  $\mpinf(K)$  of $p$-power roots 
of unity in  $K^\times$)  and  $n=\dim_E(V)$.
In fact, an eigenvalue of  $\tau$  is of $p$-power order 
if it is a root of unity, since  $\tau$  is an element 
of a pro-$p$ group. 
So if  $m$  denotes the $p$-part of  $m'$, then we have  
$\l_i^m=1$  for all  $i$. 
Thus the subgroup  
$H^m=\{\t^m|\ \t\in H\}$  of  $H$  
acts unipotently on  $V$; this proves (i). 
Then the semisimplification of the restriction of  
$\rho$  to  $H$  is, after a suitable extension  $E'/E$  
of scalars, a sum of characters   
$H/H^m\to\mum$.  
Hence it is trivialized by a finite abelian extension of  $K(\mpinf)$, 
which is in fact obtained by composing a finite extension  $K'/K$  
with  $K(\mpinf)$; this proves (ii).  
\end{proof}

\begin{lemma}
\label{lem:residue}
Let  
$\kM$  be the residue field of  $M$, and let  
$\kMab$  be the maximal abelian extension of  $k$  contained in  
$\kM$. Let  
$\Mab$  be the maximal abelian extension of  $K$  contained in  
$M$. Then:

\sn
(i) 
The abelian groups  $\Gal(\Mab/K(\mpinf))$  and  $\Gal(\kMab/k)$  
have finite exponents.

\sn
(ii) 
If  $k$  is finite, then the extension  
$\kM/k$  is finite.
\end{lemma}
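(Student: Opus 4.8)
Since $K$ is complete, every algebraic extension of $K$ is henselian, so the valuation of $K$ extends uniquely to $K(\mpinf)$ and to $M$; write $\kmu$ for the residue field of $K(\mpinf)$, so that $k\subseteq\kmu\subseteq\kM$. The plan is to establish the two assertions in (i) separately and to obtain (ii) along the way.

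For $\Gal(\Mab/K(\mpinf))$: since $K(\mpinf)/K$ is abelian, $K(\mpinf)\subseteq\Mab$, so this group is the image of $H$ in $G\ab=\Gal(\Mab/K)$, namely $H/(H\cap\overline{[G,G]})$. As $\chi(G)$ is open in $\Zpx$, one can pick $\s\in G$ with $\chi(\s)=1+p^e$ for some $e\geq 1$; then (\ref{eq:key}) gives $\s\t\s\inv\t\inv=\t^{p^e}$ for every $\t\in H$, so $\t^{p^e}\in\overline{[G,G]}$, and $\Gal(\Mab/K(\mpinf))$ has exponent dividing $p^e$.

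For $\Gal(\kMab/k)$ I would argue in two steps. \emph{Step 1: $\kmu/k$ is finite.} This is a bounded-ramification argument: from $\Qp(\zeta_{p^n})\subseteq K(\zeta_{p^n})$ and the fact that $\Qp(\zeta_{p^n})/\Qp$ is totally ramified of degree $\varphi(p^n)$, multiplicativity of ramification indices together with $[K(\zeta_{p^n}):K]\leq\varphi(p^n)$ forces the residue degree $f(K(\zeta_{p^n})/K)$ to be $\leq e(K/\Qp)$ for all $n$; hence the residue fields of the $K(\zeta_{p^n})$ form an increasing chain of subextensions of $\overline k/k$ of bounded degree, which stabilizes, and $\kmu$ is its union. \emph{Step 2: $\kM/\kmu$ is purely inseparable}, equivalently $M/K(\mpinf)$ has no nontrivial unramified subextension. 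If it had one, then --- $H$ being abelian and pro-$p$ --- it would have one of degree $p$, say $F'=K(\mpinf)(b^{1/p})$ by Kummer theory (recall $\mpinf\subset K(\mpinf)$). Unramifiedness gives $v(b)\in p\cdot v(K(\mpinf)^\times)$, so after scaling by a $p$-th power one may take $b$ a unit; if $\bar b$ is a $p$-th power in $\kmu^\times$ then, after a further such scaling, $b\in 1+\m$, whence $b^{1/p}\equiv 1$ and the residue extension is trivial --- so $F'/K(\mpinf)$ is ramified, a contradiction; if $\bar b$ is not a $p$-th power in $\kmu^\times$, then $x^p-\bar b$ is irreducible over $\kmu$, so the residue extension of $F'$ equals $\kmu(\bar b^{1/p})$, which is purely inseparable of degree $p$ --- again contradicting unramifiedness. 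Granting both steps, the separable closure of $k$ in $\kM$ equals the separable closure of $k$ in $\kmu$, which is finite over $k$ by Step 1; since $\kMab$ lies in it, $\kMab/k$ is finite and $\Gal(\kMab/k)$ is finite, a fortiori of finite exponent. This proves (i). For (ii): if $k$ is finite then $\kmu$ is finite by Step 1, hence perfect, so by Step 2 the purely inseparable extension $\kM/\kmu$ is trivial and $\kM=\kmu$ is finite.

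I expect Step 2 to be the main obstacle --- showing that the radical tower $M/K(\mpinf)$ acquires no unramified subextension; Step 1 and the concluding deduction are routine henselian residue-field bookkeeping, and the assertion about $\Gal(\Mab/K(\mpinf))$ reduces to the commutator identity coming from (\ref{eq:key}).
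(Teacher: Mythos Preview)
Your treatment of $\Gal(\Mab/K(\mpinf))$ is correct and is essentially the paper's argument. The problem is Step~2: the claim that $\kM/\kmu$ is purely inseparable is \emph{false}. Take $p$ odd and $K=\Qp(\zeta_p)$, so $k=\kmu=\Fp$. The unramified degree-$p$ extension $K'/K$ is cyclic, and since $\mu_p\subset K$, Kummer theory writes it as $K'=K(b^{1/p})$ for some $b\in K^\times$; hence $K'\subset M$, and $\kM$ contains the residue field $\F_{p^p}$ of $K'$, a nontrivial \emph{separable} extension of $\kmu=\Fp$. This also breaks your proof of (ii): you would conclude $\kM=\kmu$, whereas here $\kM\supsetneq\kmu$. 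The gap is in Case~1, at the step ``$\overline{b^{1/p}}=1$, hence the residue extension is trivial'': the residue field of $F'$ is not generated by the residue of the single generator $b^{1/p}$; elements such as $(b^{1/p}-1)/\pi$ for suitable $\pi$ of small positive valuation can have residues outside $\kmu$. In the counterexample every unit $b$ satisfies $\bar b\in(\Fp^\times)^p=\Fp^\times$, so one is forced into your Case~1, yet the residue extension is $\F_{p^p}/\Fp$.

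The paper's route avoids this entirely. For the second half of (i) it simply uses the surjection $\Gal(\Mab/K(\mpinf))\surj\Gal(\kMab/\kmu)$ to transfer the finite-exponent bound already established; combined with $[\kmu:k]<\infty$ this yields finite exponent for $\Gal(\kMab/k)$. For (ii), once $k$ is finite one has $\kMab=\kM$, and the paper observes $\dim_{\Fp}(H/H^p)\leq\dim_{\Fp}(K^\times/(K^\times)^p)<\infty$; thus $H$ is a topologically finitely generated abelian pro-$p$ group, so its finite-exponent quotient $\Gal(\kM/\kmu)$ is finite.
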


\begin{proof} 
Put  
$G=\Gal(M/K)$  and  
$H=\Gal(M/K(\mpinf))$. Let 
$\chi:G\to\Zp^\times$  be the $p$-adic cyclotomic character.  
By (\ref{eq:key}), we have
$$
     (G,G)\ \supset\ (G,H)
		  \ \supset\ H^{\chi(\sigma)-1}
$$
for all  $\sigma\in G$. Let  
$\Mab$  be the maximal abelian extension of  $K$  
contained in  $M$. We have surjections 
$$
  H/H^{\chi(\sigma)-1}\ \surj\ 
  H/\overline{(G,G)}\ =\ 
  \Gal(\Mab/K(\mpinf))\ \surj\ 
  \Gal(\kMab/\kmu),
$$
where  
$\overline{(G,G)}$  is the topological closure 
of        $(G,G)$  in  $G$  and  
$\kmu$  is the residue field of  $K(\mpinf)$. 
Note that  $\kmu$  is of finite degree over  $k$.  
Choose a  $\sigma\in G$  such tha  
$\chi(\sigma)\not=1$. Then, since  
$H/H^{\chi(\sigma)-1}$  is abelian of finite exponent, so are 
$\Gal(\Mab/K(\mpinf))$  and    
$\Gal(\kMab/\kmu)$, and hence so is
$\Gal(\kMab/k)$.

If  $k$  is finite, then  $\kMab=\kM$. 
Since  $H$  is isomorphic to the $p$-adic completion of  
$K^\times/((K(\mpinf)^\times)^p\cap K^\times)$, we have  
$\dim_\Fp(H/H^p)\leq\dim_\Fp(K^\times/(K^\times)^p)$, 
and the latter is finite if  $K$  is a finite extension of  $\Qp$. 
Hence  $\kM/k$  is finite.
\end{proof}

\begin{remark}
If  $K$  does not contain a primitive $p$-th root of unity, 
then we can choose a  $\sigma\in G$  as in the above proof 
such that  $\ordp(\chi(\sigma)-1)=0$. 
It then follows that  
$\Mab=K(\mpinf)$  and
$\kMab=\kmu$, and that, if  $k$  is finite, 
$\kM=\kmu$.
\end{remark}

\section{$p$-adic representations}
\label{sec:p-adic}

Let  $K$  be a complete discrete valuation field 
of mixed characteristic $(0,p)$. 
To define the weights of $p$-adic representations of  $\GK$, 
we first recall:

\begin{definition}\label{def:phi-mod} 
Let  $A$  be a ring and  
$\s:A\to A$  a ring homomorphism.
A $\f$-module over  $(A,\s)$  is a pair  
$(D,\f)$  consisting of a free $A$-module  $D$  of finite rank 
and a $\s$-semilinear map 
$\f:D\to D$.
\end{definition} 

In the following, a $\f$-module may be denoted simply by  $D$. 
If  $D$  is a $\f$-module over a Witt ring  $W(k)$  over 
a perfect field of characteristic  $p$, then 
the  $\s$  is always the Frobenius endomorphism  
$(x_n)\mapsto(x_n^p)$  of  $W(k)$.

Recall that a {\it $q$-Weil integer} of 
{\it weight}  $w\geq 0$  is an algebraic integer  $\a$  
such that  
$|\iota(\a)|=q^{w/2}$  for all field embeddings 
$\iota:\Q(\a)\hookrightarrow\C$. 
In this paper, we call an algebraic number  $\a$  
a {\it $q$-Weil number} of weight  $w$  if either  
$\a$      is a $q$-Weil integer of weight   $w\geq 0$  or  
$\a\inv$  is a $q$-Weil integer of weight  $-w\geq 0$.

\begin{definition}
\label{def:phi-wt}
Let  $F$  be the fraction field of the ring of Witt vectors  
$W(\Fq)$  over the finite field  $\Fq$  of  
$q=p^d$  elements.
A $\f$-module  $D$  over  $W(\Fq)$  or  $F$  of rank  $n$  
is said to have {\it weights}  $w_1,...,w_n$   
if the characteristic polynomial  
$\det(T-\f^d)$  has coefficients in  $\Q$  and 
the roots are $q$-Weil numbers of weight  $w_1,...,w_n$. 
(Here,  $\f^d$  denotes the $d$-th iterate of  $\f$, 
which is a $W(\Fq)$- or $F$-linear endomorphism of  $D$.) 
\end{definition}

We say that  $D$  has {\it pure weight}  $w$  if  
$w_1=\cdots=w_n=w$. 
The same remark will apply to Definitions 
\ref{def:p-wt} (2), 
\ref{def:geometric} and 
\ref{def:system}  below. 

Suppose for the moment that the residue field  
$k$  of  $K$  is perfect. Let  
$\rho:\GK\to\GLQp(V)$  be a 
de Rham representation (cf.\ e.g.\ \cite{FontaineIII}, Sect.\ 3), 
where  $V$  is a finite-dimensional $\Qp$-vector space.
Then it is in fact potentially semistable 
(\cite{Berger}).
Suppose  $V$  becomes semistable as a representation 
of  $\GKp$, where  $K'$  is a finite extension of  $K$.
If  $k'$  denotes the residue field of  $K'$  and  
$\Kpo$  denotes the fraction field of the ring of 
Witt vectors  $W(k')$  over  $k'$, then 
a filtered $\fN$-module  $\Dst(V)$  is 
associated with the $\GKp$-representation  $V$ 
(cf.\ e.g.\ \cite{FontaineIII}, Sect.\ 5); 
it is a $\f$-module over  $\Kpo$.

For  $K$  with general 
(not necessarily perfect) residue field  $k$, 
we make the following definition, 
which is motivated by Proposition \ref{prop:crys}:

\begin{definition}
\label{def:p-wt}
(1) 
An extension  $K'/K$  is said to be {\it admissible} if \\
-- $K'$  is a complete discrete valuation field 
which is the completion of an algebraic extension of  $K$; and \\
-- the residue field  $k'$  of  $K'$  is perfect.

\sn
(2)
A continuous representation  
$\rho:\GK\to\GLQp(V)$  is said to be {\it geometric of weights} 
$w_1,...,w_n$  {\it somewhere} 
if there exists a set of data 
$(K'/K,\O,\D,f)$  in which:\\
-- 
$K'/K$  is an admissible extension with residue field  $k'$  
such that  $\rho|_{\GKp}$  is semistable, \\ 
-- 
$\O$  is a $\sigma$-stable $p$-adically complete  
$\Zp$-subalgebra of  $W(k')$,\\
-- 
$\D$  is a $\f$-module over  $\O$, and \\
-- 
$f:\O\to W(\Fq)$  is a ring homomorphism for some power $q$  of  $p$,\\
such that:\\
(a) 
there is an isomorphism 
$\D\otimes_\O K'_0\isom\Dst(V|_\GKp)$  
of $\f$-modules over the fraction field  
$K_0'$  of  $W(k')$; and \\
(b) 
$\D\otimes_{\O,f}F$, where  $F$  is the fraction field of  $W(\Fq)$,  
has weights  $w_1,...,w_n$  in the sense of 
Definition \ref{def:phi-wt}.
\end{definition}

\begin{remarks}
\label{rem:somewhere}
(i) 
This definition requires only that the representation  
$\rho$  is geometric of certain weights  $w_1,...,w_n$  at ``one point''  
$f:\O\to W(\Fq)$. 
In practice, the same should hold ``generically'' over  
$\Spf(\O)$ (cf.\ \S 2.1). 
For our purposes, however, 
we need only the above weaker assumption.
(The same remark applies to Definitions 
\ref{def:geometric} and 
\ref{def:system}.)

\sn
(ii) 
If  $\rho$  is geometric of weights  
$w_1, ...,w_n$  somewhere and  $K'/K$  is a finite extension, 
then the restriction  $\rho|_{\GKp}$  is also geometric of 
the same weights somewhere 
(in fact, ``at the points lying above  $f$''). 
\end{remarks}

The $r$-th Tate twist  $\Zp(r)$  of the trivial 
representation  $\Zp$  has weight  $-2r$. 
If  $X$  is a proper smooth variety over  $K$  
with good reduction, then the $i$-th \'etale cohomology group 
$V:=\Hiet(\XKbar,\Qp)$  has pure weight  $i$  somewhere 
(in fact, ``almost everywhere''; see Sect.\ \ref{sec:cohomology}).

As in the Introduction, let  
$M$  be the extension field of  $K$  
obtained by adjoining all $p$-power roots of 
all elements of  $K^\times$. Put  
$G=\Gal(M/K)$  and  
$H=\Gal(M/K(\mpinf))$. 
The following proposition settles Part (i) of 
Theorem \ref{thm:MT2} 
(cf.\ Lem.\ \ref{lem:VG}):

\begin{proposition}
\label{prop:p-adic}
If  $\rho:\GK\to\GLQp(V)$  is a geometric representation 
of weights  $w_1,...,w_n$  somewhere and all  $w_i$  are odd, 
then  $V^\GM=0$.
\end{proposition}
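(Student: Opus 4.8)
The plan is to reduce everything to the finite-residue-field situation and there run a weight argument showing that a $G$-stable line in $V$ forces an eigenvalue of some $\varphi^d$ to be a root of unity, hence of weight $0$, contradicting oddness of the $w_i$. First I would invoke Lemma~\ref{lem:key}(ii): after a finite extension $K'/K$ (which changes neither the hypothesis, by Remark~\ref{rem:somewhere}(ii), nor the conclusion, since $G_{K'}\subset\GK$ and $M$ is preserved up to finite base) the subgroup $H=\Gal(M/K(\mpinf))$ acts unipotently on $V$. Consequently $V^\GM=(V^H)^{G/H}$ and $V^H\neq 0$ whenever $V\neq 0$; moreover $V^H$ is a nonzero $\GK$-stable subspace on which $H$ acts trivially, so it is naturally a representation of $\Gal(K(\mpinf)/K)\hookrightarrow\Zpx$ via the cyclotomic character. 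Thus it suffices to show that a geometric representation of odd weights somewhere has no nonzero subrepresentation on which the $\GK$-action factors through a subquotient of $\Zpx$ on which inertia acts through cyclotomic-type characters only.

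Next I would translate ``$V^\GM\neq 0$'' into $p$-adic Hodge theory. Pass to the admissible extension $K'/K$ of Definition~\ref{def:p-wt} with perfect residue field $k'$ over which $\rho|_{\GKp}$ is semistable, and look at $\Dst(V|_{\GKp})$, a filtered $\fN$-module over $K'_0=\Frac W(k')$. A nonzero $G$-invariant line in $V$, after the unipotent reduction above, produces a nonzero $\GK$-stable (in particular $\GKp$-stable) semistable subrepresentation $V'\subset V$ all of whose Hodge–Tate weights are $0$ (because $V^H$ is, up to a finite extension, a sum of powers of the cyclotomic character composed into a unipotent action, and crucially the invariance under $\GM$ together with the Kummer theory defining $M$ pins the Hodge–Tate weight to $0$) — or more precisely a subobject $\D(V')\subset\Dst(V|_{\GKp})$ which is weakly admissible with trivial filtration jump, forcing $\varphi$-slope $0$ on it. Here I would use that $M$ contains all $p$-power roots of all elements of $K^\times$, so being $\GM$-fixed is a very strong Sen/Hodge–Tate condition; combined with \eqref{eq:key} it kills any nonzero Hodge–Tate weight. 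Pushing $\D(V')$ through $\D\otimes_{\O,f}F$ and using compatibility (a) of Definition~\ref{def:p-wt}, the corresponding $\varphi^d$-eigenvalue is a $q$-Weil number of weight $0$, i.e.\ one of the $w_i$ equals $0$ — contradiction.

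The main obstacle is the middle step: extracting from ``$V$ has a nonzero $\GM$-fixed vector'' the statement that $\Dst(V|_{\GKp})$ acquires a $\varphi$-stable subobject of slope $0$ that survives the specialization $f:\O\to W(\Fq)$. Two technical points need care. One is that $M/K$ is not Galois-nice enough to control $\Dst$ directly, so one must first descend the $\GM$-invariance to a finite-level statement: after Lemma~\ref{lem:key}(ii) the $H$-action is unipotent, and then \eqref{eq:key} shows the open subgroup $\chi(G)$ of $\Zpx$ acts on the eigenvalues of $H$ by $p$-power exponentiation, forcing those eigenvalues to be roots of unity and hence (being in a pro-$p$ group) trivial after a further finite extension; so genuinely $H$ acts trivially on a nonzero subrepresentation. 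The second is checking that the resulting weight-$0$ phenomenon is detected by the specialized $\varphi$-module $\D\otimes_{\O,f}F$ rather than only by $\Dst$ over $K'_0$; this is exactly what compatibility (a) is designed to give, but one must verify that taking a sub-$\f$-module commutes with $\otimes_{\O,f}F$ and that weights (as defined via $\det(T-\varphi^d)$) are inherited by subobjects — a short linear-algebra check over the Witt ring. Once these are in place the contradiction with oddness is immediate via Lemma~\ref{lem:VG}.
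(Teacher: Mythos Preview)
Your overall shape is the paper's: pass to $W:=V^{\GM}$, use Lemma~\ref{lem:key} to make $H$ act unipotently so that the semisimplification is a representation of $\Gal(K(\mpinf)/K)$, and then argue that such representations have even weights. But two steps are not right.

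First, Lemma~\ref{lem:key} concerns representations of $G=\Gal(M/K)$, not of $\GK$. You cannot apply it to all of $V$ (which has no reason to factor through $G$); it must be applied to $W=V^{\GM}$, as the paper does. This is minor.

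The real gap is the claim that $\GM$-invariance ``pins the Hodge--Tate weight to $0$''. This is simply false: since $M\supset K(\mpinf)$, the group $\GM$ acts trivially on $\Qp(r)$ for \emph{every} $r\in\Z$, so $\Qp(r)^{\GM}=\Qp(r)$ with Hodge--Tate weight $-r$, not $0$. Your downstream chain (trivial filtration jump $\Rightarrow$ $\varphi$-slope $0$ $\Rightarrow$ specialized weight $0$) therefore collapses. What must actually be proved is that every de Rham character of $\GK$ factoring through $\Gal(K(\mpinf)/K)$ becomes, after a finite extension, an \emph{integral power} $\chi^r$ of the cyclotomic character --- hence has weight $-2r$, which is even (but not necessarily $0$). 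You assert this parenthetically (``a sum of powers of the cyclotomic character'') but give no argument; it is not formal, since a priori such a character lands in $E^\times$ for a nontrivial extension $E/\Qp$ and need not be an integral power of $\chi$. The paper isolates precisely this as a separate lemma and proves it via Serre's description of locally algebraic (Hodge--Tate) characters in terms of Lubin--Tate groups, together with the observation that $\Gal(K(\mpinf)/K)\to\Gal(\Qp(\mpinf)/\Qp)$ is injective to force all the exponents to coincide. Once that lemma is in hand the contradiction with oddness is immediate, and there is no need for $\Dst$, slopes, or the specialization $f$ at all.
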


\begin{proof}
By passing to an admissible extension of  $K$, 
we may assume that  $K$  has perfect residue field. 
Let  $W:=V^\GM$. 
It can be regarded as a representation of  
$G=\Gal(M/K)$. 
By (ii) of Lemma \ref{lem:key}, 
after another admissible extension of  $K$, the subgroup  
$H$  acts unipotently on  $W$.
Then  $G/H=\Gal(K(\mpinf)/K)$  acts 
on the semisimplification of  $V$. 
Each simple factor is again de Rham. 
By the next lemma, 
it has even weights, contradicting the assumption 
of odd weights, unless  $V^\GM=0$.
\end{proof}

\begin{lemma}
Let  $W$  be a semisimple de Rham representation of  $\GK$  
which factors through  $\Gal(K(\mpinf)/K)$. 
Then  $W$  becomes isomorphic, 
after a finite extension of  $K$, to a direct-sum  
$\Qp(r_1)\oplus\cdots\oplus \Qp(r_m)$ 
of Tate twists of the trivial representation  $\Qp$  
for some  $r_i\in\Z$. In particular, 
$W$  has even weights.
\end{lemma}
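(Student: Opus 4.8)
The plan is to show that any semisimple de Rham representation $W$ of $\GK$ that factors through the abelian quotient $\Gal(K(\mpinf)/K)$ is, up to finite extension, a direct sum of powers of the cyclotomic character. Since $W$ is semisimple and the image group $\Gal(K(\mpinf)/K)\incl\Zpx$ is abelian, after extending scalars to a finite extension $E/\Qp$ the representation decomposes into one-dimensional pieces, i.e.\ characters $\psi_j:\GK\to E^\times$ factoring through $\Gal(K(\mpinf)/K)$. Each $\psi_j$ is de Rham (a subquotient, after extension of scalars, of the de Rham $W$), so it suffices to classify de Rham characters of $\GK$ that factor through the cyclotomic quotient.

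For that classification, the key step is: a de Rham character of $\GK$ whose kernel contains $\Gal(\Kbar/K(\mpinf))$ must agree with an integral power of the cyclotomic character on an open subgroup. First I would reduce to $K$ with perfect (indeed finite, after the usual specialization, though here $K$ has perfect residue field by Proposition~\ref{prop:p-adic}'s opening reduction) residue field. Then, the restriction of $\psi_j$ to the inertia subgroup factors through the image of $\Gal(K(\mpinf)/K)$, which is topologically generated (after passing to an open subgroup) by elements on which $\chi$ is a topological generator of $1+p^e\Zp$; since $\psi_j$ is de Rham its Hodge--Tate weight is some integer $r_j$, and $\psi_j\cdot\chi^{-r_j}$ is then a de Rham, hence Hodge--Tate, character of Hodge--Tate weight $0$, which by Sen's theory has finite image on inertia. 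A finite-order de Rham character is unramified after a finite extension, and since it also factors through the pro-$p$ abelian group $\Gal(K(\mpinf)/K)$ and must kill inertia, it becomes trivial after a finite extension of $K$. Hence $\psi_j = \chi^{r_j}$ on an open subgroup of $\GK$, i.e.\ $\psi_j|_{\GKp} = \chi^{r_j}$ for a suitable finite $K'/K$.

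Taking $K'$ large enough to work simultaneously for all the finitely many $\psi_j$, we get $W\otimes_{\Qp}E \cong \bigoplus_j E(r_j)$ as $\GKp$-representations. A standard descent argument (the $\GKp$-equivariant decomposition of $W\otimes E$ into cyclotomic eigenspaces is already defined over $\Qp$, since $\chi$ takes values in $\Zpx\subset\Qp^\times$, so one may group the $E$-lines by the value of $r_j$ and recover a $\Qp$-rational decomposition) then yields $W|_{\GKp}\cong \Qp(r_1)\oplus\cdots\oplus\Qp(r_m)$ with $r_i\in\Z$. Finally, each $\Qp(r_i)$ has weight $-2r_i$ in the sense of Definition~\ref{def:phi-wt} (as already noted for Tate twists in the text), so $W$ has even weights, and since weights are unchanged by finite extension this concludes the proof.

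The main obstacle I anticipate is the clean classification of de Rham characters factoring through $\Gal(K(\mpinf)/K)$ --- specifically, pinning down that a Hodge--Tate weight~$0$ de Rham character with image in a pro-$p$ abelian group becomes trivial over a finite extension, rather than merely having finite or abelian image. This is where Sen's theory (finiteness of the image on inertia for weight-$0$ Hodge--Tate characters) together with the structure of $\Gal(K(\mpinf)/K)$ as an open subgroup of $\Zpx$ does the real work; once a character is known to be finite order and to factor through the cyclotomic quotient, killing it by a finite extension is routine. The scalar-extension and descent bookkeeping is straightforward but must be done carefully so that a \emph{single} finite extension $K'/K$ handles all the simple constituents at once.
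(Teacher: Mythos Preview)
Your approach is correct and takes a more direct route than the paper. The paper reduces to a simple $W$ given by a character $\rho:\GK\to E^\times$ with $E=\End_{\Qp[\GK]}(W)$, then invokes Serre's classification of Hodge--Tate abelian characters via Lubin--Tate theory (\cite{Abelian}, Chap.~III): on inertia and up to a finite-order twist, $\rho=\prod_{\sigma}\sigma^{-1}\circ\chi_{\sigma E}^{n_\sigma}$ for suitable Lubin--Tate characters $\chi_{\sigma E}$. It then uses a Galois-invariance argument --- since $\rho$ factors through $\Gal(K(\mpinf)/K)$ and hence descends to $\Gal(\Etil(\mpinf)/\Etil)\hookrightarrow\Gal(\Qp(\mpinf)/\Qp)$, it is $\Gal(\Etil/\Qp)$-invariant --- to force all the exponents $n_{\tilde\sigma}$ to coincide, which collapses the product to a power of the norm and hence of $\chi$. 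You bypass the Lubin--Tate formalism entirely by twisting $\psi_j$ by $\chi^{-r_j}$ and applying Sen's theorem directly; this is cleaner and closer in spirit to the modern treatment.

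One point deserves to be made explicit. When you write ``its Hodge--Tate weight is some integer $r_j$'', you are using that an $E$-valued de Rham character factoring through $\chi$ has a \emph{single} Hodge--Tate--Sen weight. For a general $E$-valued Hodge--Tate character (e.g.\ a Lubin--Tate character) there is one weight per embedding $\iota:E\hookrightarrow\overline{\Qp}$, and these may differ --- so $\psi_j\chi^{-r_j}$ would not have all weights zero and Sen would not apply. The needed fact does follow from your hypothesis, but requires an argument: writing $u=\psi_j(\gamma)\in 1+\mathfrak{m}_E$ for a topological generator $\gamma$, the Sen weights of $\psi_j$ viewed as a $[E:\Qp]$-dimensional $\Qp$-representation are the numbers $\iota(\log u)/\log\chi(\gamma)$; if these are all integers then each $\iota(\log u)\in\Qp$, whence $\log u\in\Qp$ and all the weights agree. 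This short observation is precisely the counterpart, in your framework, of the paper's Galois-invariance step, and it is where the hypothesis ``factors through $\Gal(K(\mpinf)/K)$'' does its real work.
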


\begin{proof}
We may assume that  $W$  is simple 
(in which case the  $r_i$'s  as above will be all equal). 
Then  
$E:=\End_{\Qp[\GK]}(W)$  is a finite extension of  $\Qp$  
by Schur's lemma 
(as  $W$  is simple of finite dimension over  $\Qp$), 
and the action of  $\GK$  on  $W$  is given by a character  
$\rho:\GK\to E^\times$. 
Replacing  $K$  by a finite extension, we may assume that  
all the  $\Qp$-conjugates of  $E$  
can be embedded into  $K$; we regard henceforth  
$E$  and its conjugates as subfields of  $K$.  Set  
$\GammaE:=\Hom_{\Qp}(E,K)$. 
Since  $\rho$  is Hodge-Tate and the inertia subgroup of  
$\Gal(K(\mpinf)/K)$  is of finite index, 
it follows from \cite{Abelian} (Chap.\ III, Sect.\ A5, Cor.\ to Thm.\ 2) 
that  $\rho$  coincides, after a finite extension of  $K$, with  
$\rtil:=\prod_{\s\in\GammaE}\s\inv\circ\chisE^{\ns}$  for some  
$\ns\in\Z$, where 
$\chisE:\GsE\to\UsE$  (where  
$\UsE$  denotes the unit group of  $\sE=\s(E)$) 
is the character describing the 
$\GsE$-action on the Tate module of a Lubin-Tate group 
associated with  $\sE$  
(it depends on the choice of a uniformizer of  $\sE$, but 
its restriction to the inertia group does not). Note that  
$\rtil$  is in fact defined on  
$\GEtilab$, where  
$\Etil$  is the Galois closure of  $E/\Qp$  and  
$G\ab$  denotes the maximal abelian topological quotient 
of a topological group  $G$. 
Since    $\rho$  factors through  $\Gal(K(\mpinf)/K)$,  
so does  $\rtil$         through  $\Gal(\Etil(\mpinf)/\Etil)$, and 
hence we are reduced to the case  
$K=\Etil$  and  $\rho=\rtil$.
If we think of  $\rtil$  as a character of  
$\Etil^\times$  via local class field theory, then 
(\cite{Abelian}, Chap.\ III, Sect.\ A4, Rem.) for  
$x\in\UEtil$, we have  
\begin{equation}\label{Eq:loc-alg}
  \rtil(x)\ =\ \prod_{\s\in\GammaE}\s\inv\NEtilsE(x\inv)^{\ns} 
          \ =\ \prod_{\stil\in\GammaEtil}\stil\inv(x\inv)^{\nstil},
\end{equation}
where  
$\NEtilsE:\Etil^\times\to\sE^\times$  
is the norm map and the integers  $\nstil$  are defined by the rule  
$\nstil=\ns$  if  $\stil|_E=\s$.
%
Since the restriction map 
$\Gal(\Etil(\mpinf)/\Etil)\to\Gal(\Qp(\mpinf)/\Qp)$  
is injective, the character  
$\rtil$  is invariant by the action of  $\Gal(\Etil/\Qp)$. 
Then the  $\nstil$'s in (\ref{Eq:loc-alg}) are all equal, because  
$\Gal(\Etil/\Qp)$  acts on  $\GammaEtil$  transitively
and the family  $(\nstil)$  is determined uniquely by the restriction of  
$\prod_{\stil\in\GammaEtil}(\stil\inv)^{\nstil}$  
to any non-trivial open subgroup of  $\UEtil$. 
If  $\nstil=r$  for all  $\stil\in\GammaEtil$, then  
$\rtil(x)=\NEtilQp(x\inv)^r$  for  $x\in\UEtil$, showing that 
$\rtil$  coincides with the $r$-th power of the $p$-adic 
cyclotomic character on the image of   
$\UEtil\to\Gal(\Etil(\mpinf)/\Etil)$, which is of 
finite index in  $\Gal(\Etil(\mpinf)/\Etil)$. 
Thus  $W$  is isomorphic to  
$\Qp(r)^{\oplus[E:\Qp]}$  when restricted to this subgroup. 

Since the weights are invariant by finite extensions of  $K$, 
the weights of  $W$  are  $2r,...,2r$.
\end{proof}

\section{$\ell$-adic representations}
\label{sec:l-adic}

In this section, we prove Theorem \ref{thm:MT2} after introducing 
a few key definitions \ref{def:geometric} and \ref{def:system}. 
Although these definitions look very complicated, 
they are motivated by the natural example  
$\Vl=\Hiet(\XKbar,\Ql)$  as in Proposition \ref{prop:coh}, 
whose proof hopefully explains the reasons for such definitions.

Let  $K$  be a complete discrete valuation field 
with residue field  $k$  of characteristic  $p>0$. 
Let  $\rho:\GK\to\GLQl(V)$  be an $\ell$-adic representation 
of  $\GK$. 
Assume  $\ell\not=p$  for the moment.

\begin{definition}
\label{def:geometric}
(1)
Let  $\kappa$  be a finite field with  $q$  elements. 
An  $\ell$-adic representation  
$\rho:\Gkappa\to\GLQl(V)$  of dimension  $n$  is said to be of 
{\it weight}  $w_1,...,w_n$  
if the characteristic polynomial of  
$\rho(\Frobkappa)$  has coefficients in  $\Q$  and the eigenvalues of   
$\rho(\Frobkappa)$  are $q$-Weil numbers of weight  $w_1,...,w_n$. 

\sn
(2)
An $\ell$-adic representation  
$\rho:\GK\to\GLQl(V)$  of dimension  $n$  is said to be 
{\it geometric of weight}  $w_1,...,w_n$  {\it somewhere}  
if there exists a set of data  
$(K'/K, S, s, \rps)$  in which:\\
--
$K'/K$  is a finite extension such that  
the semisimplification  $\rho'=(\rho|_\GKp)\ss$   of the 
restriction of  $\rho$  to  $\GKp$  is unramified 
(so that  $\rho'$  is regarded as a representation of the Galois group  
$\Gkp$  of the residue field  $k'$  of  $K'$), \\
-- 
$S=\Spec(A)$, where  
$A$  is a subring of  $k'$  whose fraction field is of finite type 
over the prime field  $\Fp$,\\  
-- 
$s$  is a closed point of  $S$,\\
-- 
$\rps:\Gks\to\GLQl(V)$  
is an $\ell$-adic representation, 
where  $\ks$   is the residue field of  $s$,\\
such that:\\
(a) 
$\rho'$  extends to a representation  
$\rpS:\GkS\to\GLQl(V)$  of the Galois group  
$\GkS$  of the fraction field  $\kS$  of  $A$ 
(note that there is the restriction map  
$\Gkp\to\GkS$),\\
(b)
$\rpS|_\Ds=\rps\circ r$, where  
$\Ds$  is a decomposition subgroup ($\subset\Gkpo$) of  $s$  and    
$r:\Ds\to\Gks$  is the natural surjection, and\\
(c) 
$\rps$  has weights  $w_1,...,w_n$  in the sense of (1).  
\end{definition}

Note that these properties are preserved by a 
``finite extension'' of  $(S,s)$.  
Thus if  $\rho$  is geometric of weights  
$w_1,...,w_n$  somewhere, then so is  
$\rho|_\GKp$  for any finite extension  
$K'/K$.

In what follows, we assume that the residue field  
$k$  of  $K$  is essentially of finite type. 

Here we could prove Part (ii) of Theorem \ref{thm:MT2}. 
The proof is, however, the same if the     
$\rl$  is a member of a system  $\rlL$  of 
$\ell$-adic representations; hence for the sake of 
brevity, after some more discussions, 
we shall prove Parts (ii) and (iii) of the theorem 
at once, assuming the  $\rl$  of  (ii)  is a member 
of  $\rlL$  in (iii). 

Next we consider geometric systems of $\ell$-adic representations. 
Let  
$\Lambda$  be a set of primes numbers, and  
$\rlL$  be a system of $\ell$-adic representations 
$\rl:\GK\to\GLQl(\Vl)$  of a fixed dimension  $n$. 

\begin{definition}
\label{def:system}
The system  $\rlL$  is said to be 
{\it geometric of weights}  $w_1,...,w_n$  {\it somewhere} if 
$\rl$  is so for all  $\ell\in\Lambda$, 
where  $K'/K$  and  $A\subset k'$  as above are taken 
in common for all  $\ell\in\Lminp$; 
thus we require the following conditions:\\
-- 
$\rho_p$  is geometric of weights  $w_1,...,w_n$  somewhere 
in the sense of Definition \ref{def:p-wt}, (2),\\
-- 
there exists a set of data  
$(K'/K,S,s,(\rpls)_{\ell\in\Lminp})$  in which 
$(K'/K,S,s)$  is as in Definition \ref{def:geometric}, (2), and  
$\rpls:\Gks\to\GLQl(V)$  satisfies the following for each  
$\ell\in\Lminp$:\\
(a) 
$\rpl$  extends to a representation  
$\rplS$  of  $\GkS$,\\
(b) 
$\rplS|_\Ds=\rpls\circ r$,  and\\
(c) 
$\rpls$  has weights  $w_1,...,w_n$. 
\end{definition}

\begin{remark}
It is known that, 
under a mild condition 
(that ``no finite extension of  $k$  contains all 
$\ell$-power roots of unity'' --- this is the case if  
$k$  is essentially of finite type (cf.\ the Introduction)), 
there exists a finite extension  $K'/K$  such that 
$(\rho|_\GKp)\ss$  becomes unramified
(\cite{ST}, Appendix). 
For example,  $K'/K$  can be taken to be the 
extension corresponding to the kernel of 
 ``$\rho$ (mod $\ell$)''   if  $\ell\not=2$ 
(``$\rho$ (mod $\ell^2$)'' if  $\ell=2$). 
However, when we consider a system of $\ell$-adic representations, 
it is not clear if we can take such a finite extension  $K'/K$  
in common for all  $\ell$. 
Nevertheless, we can do so for representations coming from geometry 
by the following (\cite{Berthelot}, Prop.\ 6.3.2):
\end{remark}

\begin{proposition}
\label{prop:alteration}
Let  $X$  be a variety over  $K$. 
Then there exists a finite extension  $K'/K$  such that  
the action of the inertia subgroup of  $\GKp$  on  
$\Hiet(\XKbar,\Ql)$  is unipotent for all  $\ell\not=p$.
\end{proposition}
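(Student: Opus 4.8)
The plan is to reduce to the semistable reduction theorem for varieties via de Jong's alterations, which is exactly how \cite{Berthelot} obtains this. First I would recall that for a proper smooth variety the statement is classical: the $\ell$-adic monodromy is quasi-unipotent (Grothendieck's local monodromy theorem), and since we may pass to a finite extension to kill the quasi- part, the inertia acts unipotently; moreover the extension needed does not depend on $\ell$, because it can be read off from the semistable reduction of the variety itself rather than from the Galois representation. So the real content is to bootstrap from the smooth proper case to an arbitrary variety $X/K$.

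The key steps, in order, would be: (1) By de Jong's theorem on alterations, there is a finite extension $K'/K$, a proper smooth $K'$-variety $\bar{X}'$, and a proper surjective generically finite morphism $\bar{X}' \to X_{K'}$, together with a normal crossings divisor $D'$ in $\bar{X}'$ such that the open complement $U' = \bar{X}'\setminus D'$ maps to $X_{K'}$; one arranges that the situation extends to a semistable model over the ring of integers $\mathcal{O}_{K'}$ after possibly enlarging $K'$. (2) Using the Leray spectral sequence and proper descent (or de Jong's cohomological descent for alterations), one expresses $\Hiet(X_{\Kbar},\Ql)$ as a subquotient of the $\ell$-adic cohomology of the smooth pieces appearing in a simplicial resolution built from $\bar{X}'$ and the strata of $D'$. (3) For each such smooth proper piece, and for the open parts $U'$, invoke the nearby-cycles computation for a semistable model: the inertia action on $H^\bullet_\et$ of a variety with semistable reduction is unipotent. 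Here the crucial point is that the monodromy filtration and the fact that $I_{K'}$ acts unipotently come from the combinatorics of the semistable model, which is fixed once and for all, hence is uniform in $\ell\neq p$. (4) Since unipotence is inherited by subquotients and by extensions in an exact sequence of $I_{K'}$-modules, step (2) lets us conclude that $I_{K'}$ acts unipotently on $\Hiet(X_{\Kbar},\Ql)$ for every $\ell\neq p$ simultaneously, with the same $K'$.

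The main obstacle, and the reason this is not entirely formal, is controlling the \emph{uniformity in $\ell$}: a priori the finite extension trivializing the quasi-unipotent part of monodromy could depend on $\ell$ (as the Remark preceding the Proposition notes, for a general $\ell$-adic representation one only gets an $\ell$-dependent $K'$). The resolution is precisely that once one has a semistable alteration over $\mathcal{O}_{K'}$, the inertia already acts unipotently on the nose on the cohomology of each smooth proper stratum — no further extension is needed and nothing depends on $\ell$. So the argument hinges on having de Jong's semistable alteration available, and on the standard fact (via the theory of nearby cycles / weight-monodromy for semistable schemes) that semistable reduction forces unipotent inertia action on $\ell$-adic cohomology for all $\ell$ invertible on the special fiber. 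I would cite \cite{Berthelot}, Prop.\ 6.3.2 for the packaged statement, sketching only the descent reduction above.
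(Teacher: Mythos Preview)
Your proposal is correct and aligned with the paper: the paper does not give its own proof of this proposition but simply records it as \cite{Berthelot}, Prop.\ 6.3.2, exactly as you suggest citing. Your sketch of the underlying argument (de Jong alterations to a semistable situation, unipotence of inertia on nearby cycles for semistable schemes, cohomological descent, and inheritance of unipotence under subquotients) accurately reflects the content of Berthelot's exposition, and you correctly isolate the key point that the finite extension $K'$ is determined by the geometry of the semistable alteration and hence is uniform in $\ell$.

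One small remark: your opening claim that ``for a proper smooth variety the statement is classical'' slightly understates the issue, since even in the proper smooth case the \emph{uniformity in $\ell$} of the trivializing extension is not immediate from Grothendieck's local monodromy theorem alone and already requires something like semistable reduction (i.e.\ de Jong in general). But you effectively acknowledge this yourself later when discussing the main obstacle, so this is a matter of exposition rather than a gap.
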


\begin{remark}
In Definition \ref{def:system} above, 
we could vary the weights  $w_1,...,w_n$  along  $\ell$, 
but this would not happen in geometry. 
Moreover, 
the characteristic polynomials 
of the Frobenius  $\rho_\ell(\Frobs)$  should be independent of  $\ell$
(cf.\ Conjecture $C_\text{WD}(X,m)$ in Section 2.4.3 of \cite{FontaineVIII}). 
However, we will not use this property in the following.
\end{remark}

Let  $\rlL$  be a geometric system of $\ell$-adic representations  
$\rl:\GK\to\GLQl(\Vl)$  of weights  
$w_1,...,w_n$  somewhere. 
We shall employ a kind of weight arguments as follows 
to deduce information on the Frobenius eigenvalues. Let  
$(K'/K,S,s,(\rpls)_{\ell\in\Lminp})$  be a set of data 
as in Definition \ref{def:system}. Let  
$L/K$  be an algebraic extension and  
$L'=LK'$  the composition of  $L$  and  $K'$  
with respect to some embeddings  
$L \incl\Kbar$  and  
$K'\incl\Kbar$. Let   
$\kLp$  be the residue field of  $L'$  and      
$\ALp$     the separable integral closure of  $A$  in  $\kLp$    
(recall that  $S=\Spec(A)$  and  
$A$  is a subring of  $k'$). Let  
$s'$  be a closed point of  $\Spec(\ALp)$  lying above  $s$. 
Assume it has finite residue field  $\ksp$, 
so that we have the geometric Frobenius element  
$\Frobsp\in\Gksp$.  
Since the restriction map  
$\GkLp\to\GkALp$  is surjective (where  $\kappa(\ALp)$  is 
the fraction field of  $\ALp$), 
by abuse of notation, we shall write  
$\rl(\Frobsp)$  for  
$\rpls(\Frobsp)$. 
Then the eigenvalues of  $\rl(\Frobsp)$  are 
$|\ksp|$-Weil numbers of weights  
$w_1,...,w_n$. Part (i) of the next lemma follows from this immediately:

\begin{lemma}
\label{lem:weight}
Let the notations and assumptions be as above. 
Assume further that  $w_i\not=0$  for all  $i$.  
Then:

\sn
(i) 
$\rl(\Frobsp)$  has eigenvalues 
$\not=1$  for all  $\ell\in\Lminp$.

\sn
(ii) 
$\rl(\Frobsp)$  has eigenvalues 
$\not\equiv 1$ (mod $\ell$)  for all but finitely many  $\ell\in\Lminp$.
\end{lemma}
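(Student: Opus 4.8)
The plan is to deduce both statements from the fact, established just before the lemma, that the eigenvalues of $\rl(\Frobsp)$ are $|\ksp|$-Weil numbers of weights $w_1,\dots,w_n$, combined with the hypothesis $w_i\neq 0$ for all $i$. Write $q=|\ksp|$; note $q$ is a power of $p$, and $q>1$. Part (i) is essentially immediate, as the text already indicates: if $\a$ is a $q$-Weil number of weight $w\neq 0$, then either $\a$ is a $q$-Weil integer with $|\iota(\a)|=q^{w/2}\neq 1$ for every embedding $\iota$, or $\a^{-1}$ is such an integer; in either case $|\iota(\a)|\neq 1$ for all $\iota$, so $\a\neq 1$. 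Hence no eigenvalue of $\rl(\Frobsp)$ equals $1$, for every $\ell\in\Lminp$. (Here one uses only that the absolute values are pinned down; nothing $\ell$-adic enters.)

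For part (ii) the point is to control the reduction mod $\ell$ uniformly in $\ell$. First I would reduce to an $\ell$-independent integrality statement. Let $P(T)=\det(T-\rl(\Frobsp)\mid \Vl)$; by the definition of ``geometric of weights $\dots$ somewhere'' (Definition \ref{def:geometric}, (1), which governs $\rpls$), $P(T)\in\Q[T]$, and since its roots are $q$-Weil numbers (algebraic integers, or inverses of algebraic integers) one sees that after clearing denominators $P$ is, up to a unit, the characteristic polynomial of an algebraic integer matrix; more precisely, each root $\a$ satisfies $|\iota(\a)|=q^{\pm w_i/2}$, so $\a$ and $\a^{-1}$ are algebraic integers dividing a fixed power of $q$ (a power of $p$) in the ring of integers of the splitting field $\Etil$ of $P$. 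The key observation is then: the polynomial $P(1)=\prod_i(1-\a_i)\in\Q$ is a \emph{nonzero} rational number — nonzero precisely because no $\a_i$ equals $1$ by part (i) — and its numerator and denominator involve only the prime $p$ together with finitely many primes dividing the norm $N_{\Etil/\Q}\bigl(\prod_i(1-\a_i)\bigr)$, a fixed nonzero integer depending only on the datum $(S,s,s')$, not on $\ell$.

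I would finish as follows. An eigenvalue $\a_i$ of $\rl(\Frobsp)$ is $\equiv 1\pmod\ell$ (in the ring of integers of $\Etil$, for a chosen prime above $\ell$) exactly when $1-\a_i$ lies in that maximal ideal; if this happens for some $i$ then $\ell$ divides $N_{\Etil/\Q}\bigl(\prod_i(1-\a_i)\bigr)$. Since that integer is nonzero and independent of $\ell\in\Lminp$, only finitely many $\ell$ can divide it; for all other $\ell\in\Lminp$ every eigenvalue satisfies $\a_i\not\equiv 1\pmod\ell$. This proves (ii). The main obstacle — really the only subtlety — is making the ``independent of $\ell$'' claim precise: one must know that the characteristic polynomial $P(T)$, hence the integer $N_{\Etil/\Q}(\prod_i(1-\a_i))$, does not vary with $\ell$. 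In our setting this is built into Definition \ref{def:geometric}: the $\rpls$ all arise from a single representation $\rplS$ of $\GkS$ restricted to a decomposition group at $s$ and further to $s'$, and the stated rationality of the Frobenius characteristic polynomial is part of the hypothesis; if one preferred, the remark following Definition \ref{def:system} notes the stronger $\ell$-independence coming from geometry, but we only need the fixed nonzero rational number $P(1)$, which the definition already supplies.
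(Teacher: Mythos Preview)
Your argument for (i) is fine and matches the paper. The issue is in (ii): you assert that the characteristic polynomial $P_\ell(T)=\det(T-\rl(\Frobsp))$ is independent of $\ell$, claiming this is ``built into Definition \ref{def:geometric}''. It is not. For each $\ell\in\Lminp$ there is a \emph{separate} representation $\rplS:\GkS\to\GLQl(\Vl)$ on a $\Ql$-vector space; Definition \ref{def:system} requires only that, for each $\ell$, the polynomial $P_\ell(T)$ has rational coefficients and roots that are $|\ksp|$-Weil numbers of the prescribed weights. The Remark immediately following Definition \ref{def:system} says explicitly that the $\ell$-independence of the Frobenius characteristic polynomials is \emph{not} assumed (``we will not use this property in the following''). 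So your ``fixed nonzero rational number $P(1)$'' may in fact vary with $\ell$, and the argument as written does not exclude infinitely many bad primes.

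The paper closes this gap with a boundedness argument rather than an independence one. After reducing (by replacing $\Vl$ with its dual if necessary) to the case where all $w_j>0$, the eigenvalues are algebraic integers, so $P_\ell(T)\in\Z[T]$; it has fixed degree $n$ and its roots have absolute value bounded by $|\ksp|^{\max_j w_j/2}$. Hence the integer coefficients of $P_\ell$ are bounded independently of $\ell$, and only \emph{finitely many} polynomials can occur as $P_\ell$ when $\ell$ ranges over $\Lminp$. Taking $P(T)$ to be the product of this finite set of polynomials, one has $P(1)\in\Z\smallsetminus\{0\}$ by part (i), and any $\ell$ for which some eigenvalue is $\equiv 1\pmod\ell$ must divide $P(1)$. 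Your norm computation is then unnecessary: once $P_\ell\in\Z[T]$, the quantity $P_\ell(1)$ is already a rational integer.
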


\begin{proof} 
We prove (ii).
With no loss of generality, 
we may assume that  $w_j>0$  (and so the eigenvalues of  
$\rl(\Frobsp)$  are algebraic integers). 
By definition, 
$\rl(\Frobsp)$  has eigenvalues of absolute values  
$|\ksp|^{w_j/2}$, and hence there appear only finitely many 
polynomials as the characteristic polynomials  
$\Pl(T)=\det(T-\rl(\Frobsp))\in\Z[T]$  
when  $\ell\in\Lminp$  varies. 
Let  $P(T)$  be the product of these polynomials. 
Since  $w_j\not=0$, we have  $P(1)\not=0$. Then    
$\rl(\Frobsp)$  has eigenvalues  $\not\equiv 1$ (mod $\ell$), 
for all  $\ell$  such that    $P(1)\not\equiv 0$ (mod $\ell$).
\end{proof}

Now we can complete:

\begin{proof}[Proof of Theorem \ref{thm:MT2}]
By replacing  $K$  by a finite extension  $K'$  as in 
Definition \ref{def:system}, 
we may assume that the intertia subgroup  $I_K$  acts 
unipotently on  $\Vl$  for all  $\ell\in\Lambda\minus\{p\}$. 
Let  $\Tl$  be the $\ell$-th component of  $T$. 

(i) 
By Lemma \ref{lem:VG} and Proposition \ref{prop:p-adic}, 
$(\Vp/\Tp)^\GM$  is finite.

(ii) 
For  $\ell\in\Lminp$, 
let  $\Wl$  be the semisimplification of  
$\Vl^\GM$  considered as a representation 
of  $G=\Gal(M/K)$. By Lemma \ref{lem:key}, 
the subgroup  $H^m$  acts trivially on  $\Wl$, where  
$H=\Gal(M/K(\mpinf))$  and  $m$  is an integer as in \ref{lem:key}). 
Let  $L=M^{H^m}$  be the extension of  $K(\mpinf)$  
corresponding to  $H^m$, and let 
$\kmu$  and  $\kL$  be the residue fields of  
$K(\mpinf)$  and  $L$, respectively. 
Note that  $\kmu$  is a finite extension of  $k$. Let  
$(S=\Spec(A),s)$  be part of the data as in 
Definition \ref{def:system}, 
$\AL$  the separable integral closure of  $A$  in  $\kL$, and  
$s'$  a closed point of  $\Spec(\AL)$  lying above  $s$.
Since  $\kL/\kmu$  is abelian of exponent dividing  $m$, 
the residue degree  
$[\ksp:\ks]$  also divides  $m$.
Thus we can apply (i) of Lemma \ref{lem:weight} to conclude that  
$\rl(\Frobsp)$  
has eigenvalues  $\not=1$  for all  $\ell\in\Lminp$. 
This contradicts the fact that  
$H^m$  acts trivially on  $\Wl$, unless  
$\Wl^\GM=0$  for all  $\ell\in\Lminp$. 

(iii) 
By (i) and (ii), we only need to prove (iii-1). 
To prove that  
$(\Vl/\Tl)^\GM=0$, it is enough to show that  
$(\Tl/\ell\Tl)^\GM=0$. 
Let  $\Wlbar$  be the semisimplification of  
$(\Tl/\ell\Tl)^\GM$  considered as a representation 
of  $G$. 
By Lemma \ref{lem:key} again, 
the subgroup  $H^m$  acts trivially on  $\Wl$. 
Arguing as above and applying (ii) of Lemma \ref{lem:weight}, 
we conclude that  $\rl(\Frobsp)$  
has eigenvalues $\not\equiv 1$ (mod $\ell$)  
for almost all  $\ell\in\Lminp$. 
This contradicts the fact that  
$H^m$  acts trivially on  $\Wlbar$, unless  
$(\Tl/\ell\Tl)^\GM=0$  for almost all  $\ell$. 
\end{proof}

\section{Cohomologies}\label{sec:cohomology}

Theorem \ref{thm:MT1} follows from 
Theorem \ref{thm:MT2} by noticing the following:

\begin{proposition}
\label{prop:coh}
Let  $X$  be a proper smooth variety over  $K$  
with potentially good reduction, and  
$i$  an integer $\geq 0$. 
Then the representations  
$\rl:\GK\to\GLQl(\Vl)$  on the $i$-th $\ell$-adic cohomology groups  
$\Vl=\Hiet(\XKbar,\Ql)$  form a geometric system  
$\rlL$  of pure weight  $i$  somewhere, 
where  $\Lambda$  is the set of all prime numbers.
\end{proposition}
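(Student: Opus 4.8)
The plan is to verify the three conditions in Definitions~\ref{def:geometric} and~\ref{def:system} for the system $\rlL$, $\Vl=\Hiet(\XKbar,\Ql)$, by reducing $X$ to a smooth proper model over a base of finite type over $\Fp$. First I would invoke Proposition~\ref{prop:alteration} to find a finite extension $K'/K$ such that the inertia subgroup of $\GKp$ acts unipotently on $\Hiet(\XKbar,\Ql)$ for all $\ell\neq p$; since $X$ has potentially good reduction, after enlarging $K'$ further we may assume $X_{K'}$ has good reduction, so the inertia action is in fact \emph{trivial} and $\rho'=(\rho|_\GKp)\ss$ is unramified for every $\ell\neq p$, giving the representation of $\Gkp$ required in Definition~\ref{def:geometric}, (2). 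Simultaneously, $\rho_p|_{\GKp}=\Hiet(\XKbar,\Qp)$ is crystalline, which (via Proposition~\ref{prop:crys}) provides the $\f$-module data $(\O,\D,f)$ making $\rho_p$ geometric of weight $i$ somewhere in the sense of Definition~\ref{def:p-wt}, (2). The key point is that $K'/K$ and (below) the base ring $A\subset k'$ can be chosen \emph{in common for all $\ell$}, which is exactly what Proposition~\ref{prop:alteration} buys us; this is why that proposition was quoted.

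Next I would spread out the situation geometrically. Choose a smooth proper model $\mathcal{Y}\to\Spec(\O_{K'})$ of $X_{K'}$; the special fiber $Y/k'$ is smooth and proper. Because $Y$ together with the finitely many polynomial equations defining it and its smoothness/properness are all finitely presented data, there is a subring $A\subset k'$, finitely generated over $\Fp$ (hence with $\Frac(A)$ of finite type over $\Fp$), and a smooth proper scheme $\Y_A\to S=\Spec(A)$ whose base change to $k'$ is $Y$. By the smooth proper base change theorem for $\ell$-adic cohomology ($\ell\neq p$), the sheaves $R^i(\Y_A)_\et\Ql$ are lisse on $S$, so $\Vl\cong\Hiet(Y_{\kbar},\Ql)$ carries a continuous action of $\GkS=\pi_1(S)$ extending the $\Gkp$-action (using $\Gkp\to\GkS$), giving condition (a); this is the geometric representation $\rpS$. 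For a closed point $s\in S$ with residue field $\ks$ (a finite field), specialization identifies the stalk at a geometric point over $s$, functorially with respect to the decomposition group $\Ds\subset\Gkpo$, so that $\rpS|_\Ds$ factors through $r:\Ds\surj\Gks$ as $\rps\circ r$ with $\rps$ acting on $\Hiet(Y_{s,\overline{\ks}},\Ql)$; this is condition (b). Finally condition (c): by the Weil conjectures (Deligne), $\rho_{\ell,s}(\Frob_s)$ acts on $\Hiet(Y_{s,\overline{\ks}},\Ql)$ with all eigenvalues $|\ks|$-Weil numbers of weight $i$, and with $\Z$-coefficient characteristic polynomial independent of $\ell$; so the system has pure weight $i$ somewhere. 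Matching the $\ell\neq p$ data to the $p$-adic data requires only that the \emph{same} $(K',S,s)$ be used, which holds by construction since $\Y_A$ and its special point were chosen once and for all.

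The main obstacle is organizational rather than deep: one must be careful that the finite extension $K'/K$, the model $\Y_A$, the base ring $A$, and the point $s$ can be fixed once and then simultaneously serve every $\ell$, including $\ell=p$ (where the cohomology is handled by crystalline/$p$-adic Hodge theory and Proposition~\ref{prop:crys} rather than by smooth base change). A secondary technical point is the comparison, for $\ell\neq p$, between the Galois action on $\Hiet(\XKbar,\Ql)$ of the generic-fiber geometry and the $\pi_1(S)$-action on $\Hiet(Y_{\kbar},\Ql)$: this is the combination of smooth proper base change over $\O_{K'}$ (identifying generic and special fiber cohomology compatibly with inertia acting trivially) and the spreading-out/base-change isomorphism over $S$, both standard but requiring one to track the decomposition group $\Ds$ and the surjection $r:\Ds\to\Gks$ so that (b) reads literally as in Definition~\ref{def:geometric}. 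I do not expect any genuine difficulty beyond assembling these standard inputs; the statement is essentially the assertion that ``cohomology of good-reduction varieties is geometric of pure weight $i$,'' and the Definitions~\ref{def:geometric}--\ref{def:system} were tailored precisely to this example.
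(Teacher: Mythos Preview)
Your proposal is correct and follows essentially the same route as the paper: pass to a finite extension over which $X$ has good reduction, spread the special fibre out over a finitely generated $\Fp$-algebra, then invoke smooth proper base change plus the Weil conjectures for $\ell\neq p$ and crystalline comparison for $\ell=p$.

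Two minor corrections are worth making. First, Proposition~\ref{prop:alteration} is not needed here: since $X$ already has potentially good reduction, one passes directly to a finite $K'/K$ over which $X$ acquires a smooth proper model, and then inertia acts trivially (not merely unipotently) on $\Hiet(\XKbar,\Ql)$ for $\ell\neq p$; the paper does exactly this in its first line. Second, you need not---and in general cannot---make a single finite $K'$ serve $\ell=p$ as well. In Definition~\ref{def:system} the $p$-adic datum is handled \emph{separately} via Definition~\ref{def:p-wt}, which requires an admissible extension with \emph{perfect} residue field; your finite $K'$ need not have this. The paper accordingly treats the $p$-part on its own, first replacing $K$ by an admissible extension before invoking Proposition~\ref{prop:crys} and $p$-adic Hodge theory. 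Once you decouple the two cases, the ``main obstacle'' you flag in your last paragraph disappears: the common $(K',S,s)$ is required only for $\ell\in\Lminp$.
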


\begin{remark}
The ``somewhere'' here is in fact ``almost everywhere'', 
as can be seen from the proof below.
\end{remark}

\begin{proof} 
Replacing  $K$  by a finite extension, we may assume that 
$X$  has good reduction over  $K$. 
Let  $\X$  be a proper smooth model over  $\OK$  of  $X$, and 
let  $Y:=\X\otimes_\OK k$. 
Then  $Y$  is in fact defined over a subring  $A_1$  of  $k$   
which is finitely generated as an $\Fp$-algebra. 
Let  $\Y$  denote this model over  $A_1$. 
Since  $\Y$  has generically good reduction, 
it has good reduction at every closed point  $t$  of an open subscheme   
$T_1$  of  $\Spec(A_1)$. Let  
$\Yt:=Y\times_T\Spec(\kt)$  and  
$\Ytbar:=Y\times_{T_1}\Spec(\ktbar)$, where  
$\kt$  is the residue field of  $t$.  
Then we have the following proposition (cf.\ \cite{Jannsen}, Sect.\ 2):

\begin{proposition}
\label{prop:l-coh}
For any prime number  $\ell\not=p$, 
we have a canonical isomorphism
$$
   \Hiet(\XKbar,\Ql)\ \simeq\ \Hiet(\Ytbar,\Ql)
$$
which is compatible with the actions of  
$\Dt$  and  $\Gkt$.
\end{proposition}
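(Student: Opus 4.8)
The plan is to realize the asserted isomorphism as the composite of two classical base-change isomorphisms: the comparison of the generic and special fibres of the proper smooth $\OK$-model $\X$, followed by a specialization inside the smooth proper family $\Y\to T_1$. For the first, I would apply the smooth and proper base change theorems to the structure morphism $g\colon\X\to\Spec(\OK)$. Since $g$ is proper and smooth and $\ell$ is invertible on $\Spec(\OK)$, the higher direct image $R^ig_*\Ql$ is a lisse $\Ql$-sheaf on $\Spec(\OK)$; proper base change identifies its stalk over the closed point with $\Hiet(Y_{\kbar},\Ql)$ and its stalk over the generic point — for the geometric point induced by $\Kbar$ — with $\Hiet(\XKbar,\Ql)$. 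As $\pi_1^{\et}(\Spec(\OK))\cong\Gk$, the lisseness of $R^ig_*\Ql$ means that the $\GK$-action on $\Hiet(\XKbar,\Ql)$ is unramified (the inertia $I_K$ acts trivially — this is where the good reduction of $X$ enters) and that, as $\Gk$-modules, $\Hiet(\XKbar,\Ql)\cong\Hiet(Y_{\kbar},\Ql)$ with its natural action.

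Next I would spread this out over $T_1$. After shrinking $T_1$ if necessary, $f\colon\Y\to T_1$ is proper and smooth, so $R^if_*\Ql$ is lisse on the integral scheme $T_1$, again because $\ell$ is invertible there. The geometric point $\Spec(\kbar)$ from the first step, viewed over $T_1$, factors through $\Spec(k)$ and lies over the generic point $\eta_1$ of $T_1$; hence proper base change gives $(R^if_*\Ql)_{\Spec(\kbar)}=\Hiet(\Y_{\kbar},\Ql)=\Hiet(Y_{\kbar},\Ql)$, while $(R^if_*\Ql)_{\bar t}=\Hiet(\Ytbar,\Ql)$ for a geometric point $\bar t$ over $t$ — all of these geometric points and specialization arrows being the ones that enter the definition of the decomposition group $\Dt$. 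Because $R^if_*\Ql$ is lisse, the specialization map between these two stalks is an isomorphism $\Hiet(\Ytbar,\Ql)\isom\Hiet(Y_{\kbar},\Ql)$; composing it with the isomorphism of the first paragraph produces the desired canonical isomorphism $\Hiet(\XKbar,\Ql)\cong\Hiet(\Ytbar,\Ql)$.

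For the compatibility with the two Galois actions I would argue as follows. Restricting the $\pi_1^{\et}(T_1,\bar t)$-action on the stalk at $\bar t$ along the map $\Gkt\to\pi_1^{\et}(T_1,\bar t)$ recovers, by proper smooth base change for $\Yt\to\Spec(\kt)$, the natural $\Gkt$-action on $\Hiet(\Ytbar,\Ql)$, and (using a path between $\bar t$ and $\Spec(\kbar)$) the specialization isomorphism identifies the two stalks compatibly with the $\pi_1^{\et}(T_1)$-action; hence the isomorphism of the second paragraph is $\Gkt$-equivariant. Moreover, since $R^if_*\Ql$ is lisse on all of $T_1$, not merely generically, the $\Gk$-module $\Hiet(Y_{\kbar},\Ql)\cong\Hiet(\XKbar,\Ql)$ is unramified at $t$: the decomposition group $\Dt\subset\Gk$ acts through its quotient $r\colon\Dt\surj\Gkt$, and the resulting $\Gkt$-module is precisely $\Hiet(\Ytbar,\Ql)$ with its natural action, matched via the specialization isomorphism. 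Thus the isomorphism intertwines the $\Dt$- and $\Gkt$-actions on the two sides, which is exactly condition (b) of Definition \ref{def:geometric}.

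Every ingredient here — the smooth and proper base change theorems, lisseness of higher direct images of $\Ql$ under smooth proper morphisms, insensitivity of $\Ql$-cohomology to extensions of separably closed base fields, and the equivariance of specialization maps for lisse sheaves under decomposition groups — is standard and is assembled in exactly this setting in \cite{Jannsen}, Sect.\ 2. The one point I would write out with care, which is really where the work lies, is the bookkeeping of the geometric points and of the homomorphisms $\GK\surj\Gk$, $\Dt\incl\Gk$ and $\Gkt\to\pi_1^{\et}(T_1,\bar t)$, so that each equivariance assertion is made for the correct group action; this compatibility, and not any cohomological input, is the substance of the proposition.
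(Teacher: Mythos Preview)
Your proposal is correct and matches the paper's approach: the paper does not actually prove this proposition but simply refers to \cite{Jannsen}, Sect.\ 2, and your two-step smooth/proper base-change argument (first over $\Spec(\OK)$, then specializing along the lisse sheaf $R^if_*\Ql$ on $T_1$) is exactly the standard argument that reference contains. One minor point of bookkeeping to fix: in the paper's conventions $\Dt$ is a subgroup of $\Gko$ (the absolute Galois group of the fraction field $\ko$ of $A_1$), not of $\Gk$ --- the $\Gk$-action on $\Hiet(Y_{\kbar},\Ql)$ factors through the restriction $\Gk\to\Gko$, and it is inside $\Gko$ that one compares the $\Dt$-action with the $\Gkt$-action via $r:\Dt\to\Gkt$.
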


Here,  $\Dt$  is a decomposition subgroup of  $t$,  
which is a subgroup of the absolute Galois group of the 
fraction field  $\ko$  of  $A$. 
We have canonical homomorphisms  
$\Gk\to\Gko\hookleftarrow\Dt\to\Gkt$, and 
$\GK$  acts on the left-hand side through  
$\GK\to\Gk\to\Gko$. 
The compatibility in the proposition is with respect to the map  $\Dt\to\Gkt$. 
Since  $\Yt$  is proper smooth and defined over the finite field  $\kt$, 
the representation on  $\Hiet(\Ytbar,\Ql)$  has pure weight  $i$  
by the Weil Conjecture (\cite{Weil I}, \cite{Weil II}). 

Next we consider the $p$-adic cohomology. 
By replacing  $K$  by an admissible extension, 
we may assume that the residue field  $k$  is perfect. 
Then we have (\cite{Nakkajima}, Cor.\ 3.4):

\begin{proposition}
\label{prop:crys}
There exist 
a formal scheme  $T=\Spf(\O)$, where  
$\O$  is a $p$-adically complete formally smooth $\Zp$-subalgebra  
of  $W(k)$  such that 
$\Oi:=\O\otimes_\Zp\Fp$  is a localication of an 
$\Fp$-subalgebra  of  $k$  of finite type, 
a model  $\Y$  of  $Y$  over  $T_1:=\Spec(\O_1$),  
such that, for any closed point  $t\in T_1$, 
we have canonical isomorphisms
$\Hicr(\Y/T)\otimes_{\O}W(\kt)\simeq\Hicr(\Yt/W(\kt))$  and 
$\Hicr(\Y/T)\otimes_{\O}W(k)\simeq\Hicr(Y/W(k))$. 
\end{proposition}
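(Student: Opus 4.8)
The plan is to reduce the proposition to the base-change theorem for crystalline cohomology, after constructing a $p$-adically complete formally smooth $\Zp$-base that sits inside $W(k)$. First I would spread $Y$ out. Since $Y$ is proper, smooth and of finite type over the perfect field $k$, it descends to a proper smooth scheme over a finitely generated $\Fp$-subalgebra of $k$; localizing this algebra (properness and smoothness are open on the base) and inverting one more element (legitimate because $\Fp$ is perfect, so the fraction field in question is separably generated over $\Fp$), I obtain a subring $\O_1\subset k$ that is smooth over $\Fp$ --- \'etale over a localization of a polynomial algebra --- and essentially of finite type, together with a proper smooth model $\Y$ over $T_1:=\Spec(\O_1)$ whose base change along $\O_1\hookrightarrow k$ is $Y$. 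In particular the closed points of $T_1$ have finite residue fields.

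Next I would lift the base to characteristic zero. Since $\O_1$ is smooth over $\Fp$, there is a $p$-adically complete, $\Zp$-flat, formally smooth $\Zp$-algebra $\O$ with $\O/p\O\cong\O_1$. By formal smoothness of $\O$ over $\Zp$ and $p$-adic completeness of $W(k)$, the inclusion $\O_1\hookrightarrow k=W(k)/pW(k)$ lifts to a ring homomorphism $\O\to W(k)$, which is injective --- one checks this modulo $p^n$ by induction, the graded pieces for the $p$-adic filtration being $\O_1\hookrightarrow k$ in every degree --- so $\O$ is identified with a $\Zp$-subalgebra of the domain $W(k)$, hence is itself a domain. Formal smoothness also furnishes a Frobenius lift $\sigma\colon\O\to\O$ over $\O_1$, which I retain so that the crystalline cohomology will be a $\f$-module and the comparison isomorphisms Frobenius-equivariant.

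Then I would set up relative crystalline cohomology over $T:=\Spf(\O)$. The canonical divided powers on $(p)\subset\Zp$ extend along the flat map $\Zp\to\O$, so $\Spec(\O_1)\hookrightarrow T$ is a (formal) PD-thickening, as are $\O\hookrightarrow W(k)$ and, for each closed point $t$ of $T_1$, the surjection $\O\to W(\kt)$ lifting $\O_1\twoheadrightarrow\kt$ (obtained again from formal smoothness). Since $\Y/\O_1$ is proper and smooth, $R\Gamma_\cr(\Y/T)$ is a perfect complex of $\O$-modules, and the crystalline base-change theorem gives $R\Gamma_\cr(\Y/T)\otimes^L_\O W(\kt)\simeq R\Gamma_\cr(\Yt/W(\kt))$ and, using $\Y\otimes_{\O_1}k=Y$, also $R\Gamma_\cr(\Y/T)\otimes^L_\O W(k)\simeq R\Gamma_\cr(Y/W(k))$. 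To pass to the individual cohomology groups with the ordinary tensor product I would shrink $T_1$ once more --- harmless, as this only restricts the set of admissible closed points $t$ --- removing from $\Spec(\O)$ the part of the torsion of each $\Hicr(\Y/T)$ that is not supported over all of $\Spec(\O_1)$; the torsion that remains is $p$-power torsion, for which $\mathrm{Tor}_1^\O(-,W(\kt))$ and $\mathrm{Tor}_1^\O(-,W(k))$ vanish, so $\otimes^L$ may be replaced by $\otimes$ on cohomology, yielding the two asserted isomorphisms.

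I expect the main obstacle to be exactly this last step. Crystalline cohomology of a proper smooth variety in characteristic $p$ genuinely carries torsion, so the modules $\Hicr(\Y/T)$ cannot be made $\O$-free, and one must instead control this torsion carefully --- via a flattening stratification of $\Spec(\O)$ applied to the perfect complex $R\Gamma_\cr(\Y/T)$ --- in order to guarantee that, after shrinking $T_1$, the base-change maps $\Hicr(\Y/T)\otimes_\O W(\kt)\to\Hicr(\Yt/W(\kt))$ and $\Hicr(\Y/T)\otimes_\O W(k)\to\Hicr(Y/W(k))$ are genuinely isomorphisms (and not merely isomorphisms after $\otimes\Qp$, which would already suffice for the application in Section~\ref{sec:p-adic}). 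A secondary technical point is the construction of the embedding $\O\hookrightarrow W(k)$ lifting $\O_1\hookrightarrow k$, which rests on the formal-smoothness and $p$-adic-filtration argument above.
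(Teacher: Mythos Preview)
The paper does not give its own proof of this proposition; it is quoted, inside the proof of Proposition~\ref{prop:coh}, as Corollary~3.4 of \cite{Nakkajima}. So there is no in-paper argument to compare your proposal against --- your sketch is effectively a reconstruction of Nakkajima's result, and the overall architecture (spread out $Y$ to a smooth $\O_1\subset k$, lift to a formally smooth $p$-complete $\Zp$-algebra $\O$, embed $\O\hookrightarrow W(k)$ by formal smoothness, then invoke the base-change theorem for $R\Gamma_\cr(\Y/T)$) is the right one.

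There is, however, a genuine gap precisely where you anticipated one. Your assertion that after shrinking $T_1$ ``the torsion that remains is $p$-power torsion, for which $\mathrm{Tor}_1^\O(-,W(\kt))$ and $\mathrm{Tor}_1^\O(-,W(k))$ vanish'' is false as stated. Being annihilated by a power of $p$ does not force the higher Tor groups to vanish: with $\O$ the $p$-adic completion of $\Zp[x]$ and $t$ the point $x=0$ (so $W(\kt)=\O/(x)\simeq\Zp$), the module $M=\O/(p)\oplus\O/(p,x)$ is killed by $p$ and is supported on all of $V(p)=\Spec(\O_1)$, yet $\mathrm{Tor}_1^{\O}(M,\Zp)\simeq\Fp$ via the Koszul resolution of $\O/(p,x)$. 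Thus ``support equal to $V(p)$'' is not the right invariant to control. What one actually needs after shrinking is that each $\Hicr(\Y/T)$ admit a finite filtration with graded pieces isomorphic to $\O$ or to $\O/p^n$; equivalently, that $H^i/H^i[p^\infty]$ and each $p^jH^i[p^\infty]/p^{j+1}H^i[p^\infty]$ become free over $\O_1$. Generic freeness over $\O_1$ (applied to these finitely many finitely generated $\O_1$-modules) achieves this, and then the Tor-vanishing follows from the resolution $0\to\O\xrightarrow{p^n}\O\to\O/p^n\to0$ together with $p$-torsion-freeness of $W(\kt)$ and $W(k)$. With this correction your argument goes through; and as you note, for the use made in Section~\ref{sec:p-adic} the statement after $\otimes\,\Qp$ already suffices, which sidesteps the torsion issue entirely.
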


Note that the ring homomorphism  
$\O\to W(\kt)$  is the Teichm\"uller lift of the homomorphism 
$\Oi\to\kt$  corresponding to the morphism  $\{t\}\hookrightarrow T_1$.
Put
$\D=\Hicr(\Y/T)$  and let  
$K_0$  be the fraction field of  $W(k)$. 
By $p$-adic Hodge theory 
(\cite{Faltings}, \cite{Tsuji}, ...), we have  
$\D\otimes_{\O}K_0\simeq\Dcrys(\Hiet(\XKbar,\Qp))$.
By Th\'eor\`eme 1.2 of \cite{Chiarellotto-LeStum} 
(cf.\ also 
Thm.\ 1 of \cite{Katz-Messing} and 
Rem.\ 2.2 (4) of \cite{Nakkajima}), 
$\Hicr(\Yt/W(\kt))$  has pure weight  $i$. 

Thus, with all $\ell$-th factors together, 
$\rlL$  forms a geometric system of pure weight  $i$  somewhere 
in the sense of Definition \ref{def:system}.
\end{proof}

\section{Applications}\label{sec:application}

In this section, 
we give a sample of applications of Theorem \ref{thm:MT1} 
to Iwasawa theory 
(Similar generalizations should be possible with 
(the global versions of) the extensions studied in 
\cite{Ozeki}, but we restrict ourselves to our  $M/K$). 
Throughout this section, let  
$F$  be an algebraic number field (:= a finite extension of  $\Q$), 
$p$  a prime number, 
$\Fcyc$  the cyclotomic $\Zp$-extension of  $F$, and  
$M$  the extension of  $F$  obtained by adjoining 
all $p$-power roots of all elements of  $F$. 

The first application is to a version of 
Mazur's control theorem (\cite{Mazur}) for an extension  
$L/F$  contained in  $M$, which may be larger than  $\Fcyc$; 
this gives a particular case to which a theorem of Greenberg 
(\cite{Greenberg}) is applicable. 
Recall that, for an abelian variety  
$A$  over an algebraic numer field  $F$, we define 
$$
    \Selp(A/F)\ :=\ 
    \Ker\left(H^1(F,A[\pinf])\to\prod_v H^1(\Fv,A)[\pinf]\right),
$$
where the product is over all places  $v$  of  $F$, 
$\Fv$  is the completion of  $F$  at  $v$, 
and, if  $L/F$  is an algebraic extension, 
$$
    \Selp(A/L)\ :=\ \limiFp\Selp(A/F'),
$$
where the limit is taken with respect to the restriction maps when  
$F'$  runs over the finite extensions of  $F$  contained in  $L$. 

\begin{theorem}
\label{thm:control}
Let  $A$  be an abelian variety over  $F$  which has 
potentially good ordinary reduction at all places of  $F$  lying above  $p$. 
Let   $L$  be a $p$-adic Lie extension of  $F$  
which is contained in  $M$  and is unramified outside a 
finite set of places of  $F$. 
Then, for any finite extension  $F'/F$  contained in  $L$, 
the restriction map  
$$
    \sLFp:\ \Selp(A/F')\ \to\ \Selp(A/L)^{\Gal(L/F')}
$$ 
has finite kernel and cokernel. 
Furthermore, the order of  $\Ker(\sLFp)$  is bounded 
as  $F'$  varies.
\end{theorem}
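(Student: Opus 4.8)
The plan is to run the standard Mazur-type ``control theorem'' diagram argument, with Imai's finiteness (in the form of Theorem \ref{thm:MT1}) substituting for the classical finiteness of local torsion at places above $p$. First I would set up the commutative diagram with exact rows
$$
  \begin{CD}
    0 @>>> \Selp(A/F') @>>> H^1(F',A[\pinf]) @>>> \prod_{v'} H^1(F'_{v'},A)[\pinf] \\
    @. @VV{\sLFp}V @VV{h}V @VV{g=\prod g_{v'}}V \\
    0 @>>> \Selp(A/L)^{\Gal(L/F')} @>>> H^1(L,A[\pinf])^{\Gal(L/F')} @>>> \Bigl(\prod_{w} H^1(L_w,A)[\pinf]\Bigr)^{\Gal(L/F')}
  \end{CD}
$$
where $v'$ runs over places of $F'$ and $w$ over places of $L$. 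By the snake lemma, $\Ker(\sLFp)\subset\Ker(h)$ and $\Coker(\sLFp)$ is controlled by $\Coker(h)$ and $\Ker(g)$, so it suffices to bound $\Ker(h)$, $\Coker(h)$, and $\prod_{v'}\Ker(g_{v'})$, uniformly in $F'$.

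Next I would handle the two global terms by inflation--restriction: $\Ker(h)$ and $\Coker(h)$ are subquotients of $H^j(\Gal(L/F'),A[\pinf]^{G_L})$ for $j=1,2$. Here is where Theorem \ref{thm:MT1} enters: since $A$ has potentially good reduction at the places above $p$ and $L\subset M$, the local torsion $A(L_w)[\pinf]$ is finite for $w\mid p$; for the global statement one needs $A[\pinf]^{G_L}=A(L)[\pinf]$ finite, which follows because $L/F$ is ramified only at finitely many places and one may reduce the study of $A(L)[\pinf]$ to the completions (a $G_{F'}$-module killed by a power of $p$ that is unramified outside a finite set and locally finite at the relevant places is finite). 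Given $A(L)[\pinf]$ finite and $\Gal(L/F')$ a compact $p$-adic Lie group acting on it, standard facts (e.g.\ the cohomology of a $p$-adic Lie group with coefficients in a finite module, together with the fact that $L/F'$ is unramified outside a finite set so only finitely many inertia subgroups are relevant) give that $H^j(\Gal(L/F'),A(L)[\pinf])$ is finite, and moreover of order bounded independently of $F'$ because $A(L)[\pinf]$ is fixed and $\Gal(L/F')$ is open in the fixed group $\Gal(L/F)$, so its cohomology is a subquotient (via restriction--corestriction or the Hochschild--Serre spectral sequence for $\Gal(F'/F)$) of something depending only on $\Gal(L/F)$ and $A$.

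For the local terms $\Ker(g_{v'})$: by the standard computation $\Ker(g_{v'})=H^1(\Gal(L_w/F'_{v'}),A(L_w))[\pinf]$, and since $A(L_w)[\pinf]$ is finite when $v'\mid p$ (by Theorem \ref{thm:MT1}, as $L_w\subset$ an extension of type $M$ and $A$ has potentially good reduction there) and $\Gal(L_w/F'_{v'})$ has cohomological dimension bounded by the dimension of the $p$-adic Lie group, one gets $\Ker(g_{v'})$ finite; for $v'\nmid p$ one uses that $g_{v'}$ is injective at places of good reduction (or more generally the classical control at such places), so only the finitely many bad places and places above $p$ contribute, and there are only finitely many such $v'$ with a uniform bound on each as $F'$ varies because $L/F$ is ramified at only finitely many places and the decomposition groups stabilize. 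The main obstacle, and the place where real care is needed, is establishing the \emph{uniform} bound on $\Ker(\sLFp)$ as $F'$ ranges over all finite subextensions: one must show the finite groups $A(L)[\pinf]$, $A(L_w)[\pinf]$ stabilize (they do, being subgroups of $A[\pinf]$, hence of bounded order, hence eventually constant) and that the relevant Galois cohomology groups, being cohomology of \emph{open} subgroups of the fixed profinite groups $\Gal(L/F)$, $\Gal(L_w/F_v)$ with fixed finite coefficients, have order bounded independently of the open subgroup — this last point is the technical heart and is where the simple structure of $\Gal(M/F)$ (as exploited in Lemma \ref{lem:key} and Lemma \ref{lem:residue}) may be invoked to control the cohomology uniformly.
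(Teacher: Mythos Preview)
Your approach is quite different from the paper's, and it has a genuine gap at the most delicate point.

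The paper does not redo the snake-lemma control argument at all. It simply invokes Greenberg's general control theorem (\cite{Greenberg}, Thm.\ 1), which applies to any $p$-adic Lie extension $L/F$ satisfying two hypotheses: (a) $A(L)[\pinf]$ is finite, and (b) the extension is \emph{admissible}, meaning that for each $v\mid p$ the derived Lie algebras of the decomposition and inertia groups agree, $\dv'=\iv'$. Hypothesis (a) is exactly Theorem~\ref{thm:MT1} (applied at one place above $p$, since $A(L)[\pinf]\hookrightarrow A(L_w)[\pinf]$). Hypothesis (b) follows because, by Lemma~\ref{lem:residue}, the residue extension of $M/F$ at $v\mid p$ is finite, so $I_v$ has finite index in $D_v$, whence $\dv=\iv$ and a fortiori $\dv'=\iv'$. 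That is the whole proof.

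Your direct argument breaks down at the local kernels for $v'\mid p$. You write $\Ker(g_{v'})=H^1(\Gal(L_w/F'_{v'}),A(L_w))[\pinf]$ and then claim this is finite because $A(L_w)[\pinf]$ is finite and the Galois group has finite cohomological dimension. But the coefficients here are $A(L_w)$, not $A(L_w)[\pinf]$; the group $A(L_w)$ is enormous, and finiteness of its $p$-torsion says nothing directly about the $p$-part of $H^1$ with these coefficients. This is precisely where the \emph{ordinary} hypothesis (which you never use) and the structure of the extension enter. The actual mechanism, going back to Mazur and Coates--Greenberg and made precise in Greenberg's paper, uses the filtration $0\to\hat{A}(\mathfrak{m}_{L_w})\to A(L_w)\to\tilde{A}(\kappa_w)\to 0$: one needs (i) vanishing of $H^1$ of the formal group piece, which comes from the extension being deeply ramified at $v$ (this is what Greenberg's admissibility condition buys, and is exactly what Lemma~\ref{lem:residue} is used to verify), and (ii) finiteness of $\tilde{A}(\kappa_w)$, which needs the residue field $\kappa_w$ to be finite --- again Lemma~\ref{lem:residue}. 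Your sketch invokes Lemma~\ref{lem:residue} only vaguely at the end for ``uniform control'', not for this structural role, and never mentions the formal group or the ordinary hypothesis; without these ingredients the argument at $v\mid p$ does not go through.
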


\begin{proof}
This has been proved by Greenberg 
(\cite{Greenberg}, Thm.\ 1) 
if  $L/F$  is an admissible extension such that  
$A(L)[\pinf]$  is finite 
(here, a $p$-adic Lie extension is said to be 
{\it admissible} if, for any place  $v$  of  $F$  dividing  $p$, 
we have  
$\dv'=\iv'$, where
$\dv=\Lie(D_v)$  (resp.\ 
$\iv=\Lie(I_v)$)  
denotes the Lie algebra of a decomposition (resp.\ inertia) group 
of  $v$  in $\Gal(L/F)$, and  
$(\cdots)'$  denotes the derived Lie algebra). 
Since the finiteness of  
$A(L)[\pinf]$  is known by 
Theorem \ref{thm:MT1}, it remains for us to 
prove the admissibility of the extension  $L/F$. 
If the residue degree of  $L/F$  at  $v$  is finite, 
or equivalently, if  $I_v$  is of fintie index in  $D_v$, 
then we have  
$\dv=\iv$  and {\it a fortiori} 
$\dv'=\iv'$. 
This is indeed the case for our  $L/F$, since  
$M/F$  has finite residue degree at  $v$  by 
Lemma \ref{lem:residue}.
\end{proof}

Next we generalize some results of Kurihara and Hachimori 
to apply to the generalized Euler characteristic of 
fine Selmer groups. Let  
$E$  be an elliptic cure over  $F$, and let 
$\Sbad=\Sbad(E)$  be the set of finite places of  $F$  
at which  $E$  has bad reduction. Let  
$S$  be a finite set of places of  $F$  which contains  
$\Sbad\cup\Sp\cup\Sinf$, where  
$\Sp$  is the set of places of  $F$  lying above  $p$  and  
$\Sinf$  is the set of infinite places of  $F$. 
Let  $\FS$  denote the maximal Galois extension of  $F$  
unramified outside  $S$. Let  
$L/F$  be a $p$-adic Lie extension unramified outsie  $S$. Put  
$G=\Gal(L/F)$, 
$\LG=\Zp[\![G]\!]:=\limp_U\Zp[G/U]$, where  
$U$  runs over the open normal subgroups of  $G$, and  
$\IG:=\Ker(\LG\to\Zp)$  the augmentation ideal. Let  
$\TpE$  denote the $p$-adic Tate module of  $E$. Define 
$$
    \HoneL\ :=\ \limpFp H^1(\FS/F',\TpE),
$$
where  $F'$  runs over the finite extensions of  $F$  
contained in  $L$, and the limit is taken with respect to the 
correstriction maps. 
This  $\HoneL$  has a natural structure of continuous 
$\LG$-module. The following theorem is a variant of 
Kurihara's (\cite{Kurihara}, Lem.\ 4.3) and 
Hachimori's (\cite{Hachimori}, Thm.\ 1.3):

\begin{theorem}
\label{thm:rankH}
Assume  $p\geq 3$  and that  $F$  is an abelian extension of  $\Q$ 
which contains the $p$-th roots of unity. 
Let  $\alpha$  be an element of  $F^\times$  and let  
$L:=F(\mpinf,\alpha^{1/p^\infty})$  be the extension of  $k$  
obtained by adjoining all $p$-power roots of  
$1$  and  $\alpha$. Assume also that 
the elliptic curve  $E$  is defined over  $\Q$  and 
has good ordinary reduction at every  $v\in\Sp$. 
Then we have:

\sn
$(\ref{thm:rankH}.1)$ \ 
$\rankZp(\HoneL/\IG\HoneL)=\rankLG(\HoneL)$.

\sn
$(\ref{thm:rankH}.2)$ \
The homology group  
$H_i(G,\HoneL)$  is finite for any  $i\geq 1$.
\end{theorem}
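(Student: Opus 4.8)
The plan is to exploit the two-step structure of $G=\Gal(L/F)$ together with what is already known over the cyclotomic line. Since $p\geq 3$ and $F\supseteq\mp$, one has $F(\mpinf)=\Fcyc$, so $H:=\Gal(L/\Fcyc)$ is a closed normal subgroup of $G$ isomorphic to $\Zp$ (the Kummer direction), with quotient $\Gamma:=G/H=\Gal(\Fcyc/F)\cong\Zp$; by \eqref{eq:key} the group $G$ is metabelian and torsion-free, so $\LG$ is a Noetherian Auslander--regular local domain of dimension $3$. Two inputs are needed. First, $E(L)[\pinf]$ is finite: this is Theorem \ref{thm:MT1} applied to $E$ over the local field $F_v$ for a place $v\mid p$ of $F$ (where $E$ has good reduction and the residue field is finite, hence essentially of finite type), since $L_v$ is contained in the field obtained from $F_v$ by adjoining all $p$-power roots of all elements of $F_v^\times$. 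Consequently $\varprojlim_{F'}H^0(\FS/F',\TpE)=0$, both over $L$ and over $\Fcyc$. Second, a form of results of Kurihara and Greenberg for ordinary $E$ (\cite{Kurihara}, Lem.\ 4.3) gives that $\mathbb{H}^1_{\Fcyc}:=\varprojlim_n H^1(\FS/F_n,\TpE)$ is a free $\LGam$-module of rank $[F:\Q]$; this rests on Kato's weak Leopoldt theorem for $\TpE$ over $\Fcyc$ (valid since $E/\Q$ and $F/\Q$ abelian), the finiteness of $E(\Fcyc)[\pinf]$, good ordinary reduction at the places above $p$, and the global Euler characteristic formula.

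Next I would reduce both assertions to a single finiteness statement. The vanishing of $\varprojlim H^0$ over $\Fcyc$, together with the Hochschild--Serre (control) exact sequences for $H\triangleleft G$, yields first that $(\HoneL)^H=0$, so $\HoneL$ is $S$-torsion-free ($S$ a generator of the augmentation ideal of $\LH$); and then, using Kato's cyclotomic weak Leopoldt, that $(\HoneL)_H$ embeds into $\mathbb{H}^1_{\Fcyc}$ with $\LGam$-torsion cokernel, whence $\rankLG\HoneL=[F:\Q]$. Picking $[F:\Q]$ elements of $\HoneL$ whose images span a maximal free $\LGam$-submodule of $(\HoneL)_H$ and lifting them gives, using $S$-torsion-freeness, an injection $\LG^{[F:\Q]}\hookrightarrow\HoneL$ with $\LG$-torsion cokernel $N$ for which $H_1(H,N)=0$ and $N_H=H_0(H,N)$ is $\LGam$-torsion. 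As $H$ has cohomological dimension $1$ and $\Gamma\cong\Zp$, the Hochschild--Serre spectral sequence collapses to $H_i(G,N)\cong H_i(\Gamma,N_H)$, which is finite for every $i$ exactly when $(N_H)^\Gamma$ is finite, i.e.\ when $\gamma-1$ does not divide the $\LGam$-characteristic ideal of $N_H$. Granting this, the long exact homology sequences of $0\to\LG^{[F:\Q]}\to\HoneL\to N\to0$ give $H_i(G,\HoneL)\cong H_i(G,N)$ finite for $i\geq2$, an injection $H_1(G,\HoneL)\hookrightarrow H_1(G,N)$, and that $\Zp^{[F:\Q]}\to H_0(G,\HoneL)=\HoneL/\IG\HoneL$ has finite kernel and cokernel --- which is exactly $(\ref{thm:rankH}.2)$ and $(\ref{thm:rankH}.1)$.

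The main obstacle is therefore the non-divisibility of the $\LGam$-characteristic ideal of $N_H$ by $\gamma-1$, equivalently the finiteness of the ``bottom'' invariant $(N_H)^\Gamma$, which up to pseudo-isomorphism is the fine/$H^2$-type term $\bigl(\varprojlim_{F'}H^2(\FS/F',\TpE)\bigr)^G$. Here I would follow the twisting arguments of Kurihara and Hachimori (\cite{Hachimori}, Thm.\ 1.3): the relation $\s\t\s\inv=\t^{\chi(\s)}$ and Lemma \ref{lem:key} force any $G$-invariant quotient assembled out of $H$-(co)invariants to be controlled by the $\LGam$-characteristic ideal of $\mathbb{H}^1_{\Fcyc}/(\HoneL)_H$, whose value at the trivial character of $\Gamma$ is non-zero by the finiteness of $E(L)[\pinf]$ (Theorem \ref{thm:MT1}), the ordinarity hypothesis, and the known non-vanishing in the cyclotomic Iwasawa theory of $E/\Q$ over the abelian base $F$ (Kato's Euler system); Lemma \ref{lem:residue} supplies the finiteness of residue degrees needed to pass between the various layers. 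Making this last non-divisibility precise, rather than the formal homological reductions above, is where the real difficulty lies.
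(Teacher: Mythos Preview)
Your formal reductions in the first two paragraphs are sound and in fact reconstruct the skeleton of Hachimori's argument (\cite{Hachimori}, Thm.\ 1.3): the finiteness of $E(L)[\pinf]$ via Theorem \ref{thm:MT1}, the freeness of $\mathbb{H}^1_{\Fcyc}$ from Kurihara, the descent to $(\HoneL)_H$, and the identification of the obstruction as the finiteness of $(N_H)^\Gamma$. The paper, however, does not redo any of this. It simply observes that Hachimori's theorem yields $(\ref{thm:rankH}.1)$ and $(\ref{thm:rankH}.2)$ once one knows both $E(L)[\pinf]$ finite and $H^2(\FS/L,E[\pinf])=0$, and then establishes the latter by a short chain of citations: Kato gives $\rankLGam(\Selp(E/\Fcyc)\dual)=0$ (since $\Spss=\varnothing$ by ordinarity); Hachimori--Venjakob (\cite{Hachimori-Venjakob}, Thm.\ 2.8) propagates this to $\Selp(E/L)\dual$ being $\LG$-torsion; and then \cite{Hachimori-Venjakob}, Thm.\ 7.2 combines Selmer-torsion with $E(L)[\pinf]$ finite to force $H^2(\FS/L,E[\pinf])=0$.

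The genuine gap in your proposal is precisely your third paragraph. The ``non-divisibility'' you need is dual to the vanishing of $H^2(\FS/L,E[\pinf])$, and nothing you invoke there actually delivers it. Lemma \ref{lem:key} and the relation \eqref{eq:key} constrain how $H$ can act, but they say nothing about the size of $\mathbb{H}^2_L$; the characteristic ideal of $\mathbb{H}^1_{\Fcyc}/(\HoneL)_H$ is governed by $(\mathbb{H}^2_L)^H$, whose control already presupposes what you want. The substantive input you are missing is the $\LG$-torsion property of $\Selp(E/L)\dual$, which does not follow from cyclotomic weak Leopoldt alone but requires the Hachimori--Venjakob rank-comparison theorem to lift Kato's result from $\Fcyc$ to $L$. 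Once you insert that step, your argument collapses to the paper's.
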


\begin{proof}
For the brevity of exposition, let us first 
write down some conditions:

\sn
(\ref{thm:rankH}.3) \ 
$E(L)[\pinf]$  is finite.  

\sn
(\ref{thm:rankH}.4) \ 
$H^2(\FS/L,E[\pinf])=0$.

\sn
(\ref{thm:rankH}.5) \ 
$\Selp(E/L)\dual$  is a torsion $\LG$-module.

\sn
(\ref{thm:rankH}.6) \  
$\rankLG(\Selp(E/L)\dual)=\sum_{v\in\Spss}[\Fv:\Qp]$.

\sn
(\ref{thm:rankH}.7) \ 
$\rankLGam(\Selp(E/\Fcyc)\dual)=\sum_{v\in\Spss}[\Fv:\Qp]$.

\sn
Here, 
$\Spss$  is the set of places in  $\Sp$  at which  
$E$  has supersingular reduction, and  
$\LGam$  is the Iwasawa algebra of  $\Gamma=\Gal(\Fcyc/F)$.
By Kato (\cite{Kato}; cf.\ the discussions following 
Theorem 2.8 (pp.\ 451--452) of \cite{Hachimori-Venjakob}), the condition 
(\ref{thm:rankH}.7) holds if  $E/\Q$  has good reduction at  $p$. 
By Hachimori and Venjakob (\cite{Hachimori-Venjakob}, Thm.\ 2.8), 
we have 
(\ref{thm:rankH}.7) $\Rightarrow$ (\ref{thm:rankH}.6) 
(note that  $G=\Gal(L/F)$  does not contain $p$-torsion elements).  
By our assumption  $\Spss=\varnothing$, we have 
$\sum_{v\in\Spss}[\Fv:\Qp]=0$, and hence  (\ref{thm:rankH}.5) holds. 
By Hachimori and Venjakob (\cite{Hachimori-Venjakob}, Thm.\ 7.2), we have 
(\ref{thm:rankH}.3) + (\ref{thm:rankH}.5) $\Rightarrow$ (\ref{thm:rankH}.4), 
where (\ref{thm:rankH}.3) is true by our Theorem \ref{thm:MT1}. 
By Hachimori (\cite{Hachimori}, Thm.\ 1.3), we have 
(\ref{thm:rankH}.3) + (\ref{thm:rankH}.4) $\Rightarrow$ 
(\ref{thm:rankH}.1) + (\ref{thm:rankH}.2).
\end{proof}

Next, we give an application of Theorem \ref{thm:rankH}. 
For an elliptic cure  $E$  over  $F$  and a $p$-adic Lie extension  
$L/F$  contained in  $\FS$, we define the 
{\it fine Selmer group}  of  $E$  over  $L$  by 
$$
    \Rp(E/L)\ :=\ \limiFp\Rp(E/F'),
$$
where  
$$
    \Rp(E/F')\ :=\ 
	\Ker\left(H^1(\FS/F',E[\pinf])\to\bigoplus_{v'\in\SFp}H^1(\Fpvp,E[\pinf])\right).
$$
Here,   
$\SFp$  is the set of places of  $F'$  lying above  $S$. 

\begin{theorem}
\label{thm:fineSel}
Assume that  $p\geq 3$,  
$F/\Q$  is abelian, and that the 
elliptic curve  $E/F$  is defined over  $\Q$  and has 
good ordinary reduction at every  $v\in\Sp$. 
Let  $L/F$  be as in Theorem \ref{thm:rankH}. 
Assume further that:

\sn
$(\ref{thm:fineSel}.1)$ \  
$\Rp(E/F)$  is finite.

\sn
Then the following two conditions are equivalent:

\sn
$(\ref{thm:fineSel}.2)$ \ 
$\Rp(E/L)$  has finite $G$-Euler characteristic.

\sn
$(\ref{thm:fineSel}.3)$ \ 
The natural map
$$
    \HoneL/\IG\HoneL\ \to\ H^1(\FS/F,\TpE)
$$
has finite kernel.
\end{theorem}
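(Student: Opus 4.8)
The plan is to relate the fine Selmer group $\Rp(E/L)$ to the Iwasawa module $\HoneL$ via the fundamental exact sequence that expresses the $G$-cohomology of $E[p^\infty]$ over $\FS/L$ in terms of compact cohomology, and then read off the Euler characteristic condition. First I would set up the Poitou--Tate type machinery: over each finite layer $F'\subset L$, Poitou--Tate duality gives an exact sequence linking $H^1(\FS/F',E[p^\infty])$, the local terms $\bigoplus_{v'\in\SFp}H^1(\Fpvp,E[p^\infty])$, and the dual $H^1(\FS/F',T_p(E))^\vee$; taking the appropriate limits over $F'$ (direct limit for the discrete modules, inverse limit for $\HoneL$) produces a four-term exact sequence of $\LG$-modules relating $\Rp(E/L)^\vee$, $\HoneL$, the direct sum of local duals, and $H^0(\FS/L,E[p^\infty])^\vee$. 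Because $E(L)[p^\infty]$ is finite by Theorem \ref{thm:MT1} (this is condition $(\ref{thm:rankH}.3)$), and $H^2(\FS/L,E[p^\infty])=0$ by $(\ref{thm:rankH}.4)$ — which holds under our hypotheses as shown in the proof of Theorem \ref{thm:rankH} — the sequence simplifies so that $\Rp(E/L)^\vee$ is, up to finite error, the cokernel of a map from $\HoneL$ into a sum of local terms that are themselves controlled.

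Next I would apply the snake lemma to the commutative diagram comparing the defining sequence of $\Rp(E/L)$ (as a $G$-module, after taking $G$-invariants) with the defining sequence of $\Rp(E/F)$. The $G$-cohomology of $H^1(\FS/L,E[p^\infty])$ is governed by the homology $H_i(G,\HoneL)$ through the duality $H^i(G,H^1(\FS/L,E[p^\infty]))^\vee \cong H_i(G,\HoneL)$, and by $(\ref{thm:rankH}.2)$ of Theorem \ref{thm:rankH} these groups $H_i(G,\HoneL)$ are finite for all $i\geq 1$. Similarly the local terms contribute only finite $G$-cohomology in positive degrees, using that $E$ has good ordinary reduction at the places above $p$ and that $L/F$ is unramified outside $S$, together with the finiteness of residue degrees supplied by Lemma \ref{lem:residue}. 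Hence the $G$-Euler characteristic of $\Rp(E/L)$ is finite if and only if $H^0(G,\Rp(E/L))$ and $H^2(G,\Rp(E/L))$ are finite, and combining this with $(\ref{thm:fineSel}.1)$ (finiteness of $\Rp(E/F)$) and the control diagram, the only remaining source of possible infinitude is the kernel of the natural map
$$
    \HoneL/\IG\HoneL\ \to\ H^1(\FS/F,\TpE),
$$
which is precisely $(\ref{thm:fineSel}.3)$.

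To close the equivalence in both directions, I would argue: if $(\ref{thm:fineSel}.3)$ holds, the control diagram forces $H^0(G,\Rp(E/L))$ to differ from $\Rp(E/F)$ by a finite group, hence is finite, and $H^2(G,\Rp(E/L))$ is finite because by the exact sequence it is sandwiched between the finite group $H_2(G,\HoneL)$-type term and a finite local contribution; together with vanishing of higher cohomology (as $G$ has finite $p$-cohomological dimension, being a compact $p$-adic Lie group without $p$-torsion by $p\geq 3$), this gives finite $G$-Euler characteristic, i.e.\ $(\ref{thm:fineSel}.2)$. Conversely, if $(\ref{thm:fineSel}.2)$ holds, then $H^0(G,\Rp(E/L))$ is finite, and tracing the control diagram backwards — using again $(\ref{thm:fineSel}.1)$ and the finiteness of all the auxiliary $G$-(co)homology groups — the kernel of the displayed map must be finite, giving $(\ref{thm:fineSel}.3)$. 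The main obstacle I expect is bookkeeping the local terms $\bigoplus_{v'\in\SFp}H^1(\Fpvp,E[p^\infty])$ as $F'$ ranges over $L$: one must check that their inverse and direct limits behave well and that their $G$-cohomology in positive degrees is finite, which is where the good ordinary reduction hypothesis at $p$, the restriction $L\subset M$, and Lemma \ref{lem:residue}'s finiteness of residue degrees all get used simultaneously; the rest is a careful but routine diagram chase built on Theorem \ref{thm:rankH}.
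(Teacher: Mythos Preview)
Your plan is broadly sound, but it takes a much longer route than the paper does. The paper's proof is a one-paragraph reduction: it invokes Hachimori's Theorem~3.1 (cf.\ also Coates--Sujatha) as a black box, which already gives the equivalence $(\ref{thm:fineSel}.2)\Leftrightarrow(\ref{thm:fineSel}.3)$ once one checks the extra hypotheses $(\ref{thm:rankH}.2)$, $(\ref{thm:rankH}.4)$, the finiteness condition $(\ref{thm:fineSel}.4)$ on $H^i(G,E(L)[p^\infty])$ and $H^i(G_w,E(L_w)[p^\infty])$, and the rank equality $(\ref{thm:fineSel}.5)$: $\rank_{\Zp}(\HoneL/\IG\HoneL)=\rank_{\Zp}(H^1(\FS/F,\TpE))$. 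Hachimori also shows that $(\ref{thm:fineSel}.5)$ is equivalent to $(\ref{thm:rankH}.1)$ under the other hypotheses. The paper then simply observes that $(\ref{thm:rankH}.1)$, $(\ref{thm:rankH}.2)$, $(\ref{thm:rankH}.4)$ were established in Theorem~\ref{thm:rankH}, and that $(\ref{thm:fineSel}.4)$ follows from Theorem~\ref{thm:MT1} (plus the finite cohomological dimension of $G$).

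What you are sketching --- the Poitou--Tate sequence in the limit, the duality $H^i(G,H^1(\FS/L,E[p^\infty]))^\vee\cong H_i(G,\HoneL)$, the control diagram and snake-lemma chase --- is essentially the \emph{content} of Hachimori's proof rather than a citation of it. Your identification of the key inputs from this paper (Theorem~\ref{thm:MT1} for $(\ref{thm:rankH}.3)$ and the local finiteness, Theorem~\ref{thm:rankH} for $(\ref{thm:rankH}.2)$ and $(\ref{thm:rankH}.4)$, Lemma~\ref{lem:residue} for residue degrees) matches the paper exactly. The trade-off: your approach is self-contained and makes visible where each hypothesis is used, at the cost of redoing a nontrivial argument already in the literature; the paper's approach is far shorter and cleaner, but opaque unless one consults \cite{Hachimori}. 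If you want to align with the paper, replace your Poitou--Tate machinery and diagram chase by a direct appeal to \cite{Hachimori}, Thm.~3.1, and focus on verifying its hypotheses via Theorems~\ref{thm:MT1} and~\ref{thm:rankH}.
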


\begin{proof}
The conclusion of this theorem has been proved 
by Hachimori (\cite{Hachimori}, Thm.\ 3.1)  
(cf.\ also    \cite{Coates-Sujatha})
under the assumption of 
(\ref{thm:fineSel}.1), 
(\ref{thm:rankH}.2), 
(\ref{thm:rankH}.4), and the following two conditions:

\sn
$(\ref{thm:fineSel}.4)$ \ 
$H^i(G,E(L)[\pinf])$  and  
$H^i(G_w,E(L_w)[\pinf])$  are finite for all  
$i\geq 0$  and  $w\in S_L$.

\sn
$(\ref{thm:fineSel}.5)$ \ 
$\rankZp(\HoneL/\IG\HoneL)=\rankZp(H^1(\FS/F,\TpE))$. 

\sn
The same theorem of Coates-Sujatha and Hachimori 
says that, under the assumption of  
(\ref{thm:fineSel}.1), 
(\ref{thm:rankH}.2) and 
(\ref{thm:rankH}.4), the last condition  
(\ref{thm:fineSel}.5)  is equivalent to 
(\ref{thm:rankH}.1). 
The truth of 
(\ref{thm:rankH}.4)  was shown in the proof of 
Theorem \ref{thm:rankH}, and the condition 
(\ref{thm:fineSel}.4)  holds by Theorem \ref{thm:MT1} 
(cf.\ \cite{SerreCG}, \S 4.1). 
Hence the theorem follows.
\end{proof}

Next we turn to the study of the generalized $G$-Euler characteristics  
of the usual Selmer groups. 
Let  $L$  be a $p$-adic Lie extension of  $F$  which contains  $\Fcyc$, 
and put  
$G=\Gal(L/F)$, 
$\Gamma=\Gal(\Fcyc/F)$, and  
$H=\Gal(L/\Fcyc)$. 
For a $p$-primary discrete $G$-module  $N$, let  
$(H^i(G,N),d_i)$  be the complex defined by 
$$
  d_i:\            H^i(    G,N) \ \overset{   f_i}\to\ 
        H^0(\Gamma,H^i(    H,N))\ \overset{\phi_i}\to\ 
        H^1(\Gamma,H^i(    H,N))\ \overset{   g_i}\to\ 
                   H^{i+1}(G,N),
$$
where  
$f_i$  (resp.\ $g_i$)  is the restriction (resp.\ inflation) map and  
$\phi_i$  is the natural map  
$H^i(H,N)^\Gamma\to H^i(H,N)_\Gamma$. Let  
$\hi=\hi(G,N)$  be the $i$-th cohomology group 
of this camplex. 
We say that the $G$-module  $N$  has 
{\it finite generalized $G$-Euler characteristic} if  
$\hi$  is finite for all  $i\geq 0$  and  
$\hi=0$  for all but finitely many  $i$. 
If this is the case, we define
$$
    \chi(G,N)\ :=\ \prod_{i\geq 0}(\#\hi)^{(-1)^i}.
$$
Recall that we define 
$$
    \ShaEF\ :=\ \Ker\left(H^1(F,E)\to\prod_v H^1(\Fv,E)\right).
$$

\begin{theorem}
\label{thm:Selmer}
Assume  $p\geq 5$. 
Let  $L$  be a $p$-adic Lie extension of  $F$  such that  
$M\supset L\supset\Fcyc$  and  
$L/F$  is unramified outside a finite set of places of  $F$. Put  
$G=\Gal(L/F)$, 
$H=\Gal(L/\Fcyc)$  and  
$\Gamma=\Gal(\Fcyc/F)=G/H$. 
Assume  $G$  contains no $p$-torsion elements. Let  
$E$  be an elliptic curve over  $F$  which has good ordinary 
reduction at every place  $v\in\Sp$. 
Assume further the following two conditions:

\sn
$(\ref{thm:Selmer}.1)$ \ 
$\ShaEF[\pinf]$  is finite.

\sn
$(\ref{thm:Selmer}.2)$ \ 
$\Selp(E/L)\dual/\Selp(E/L)\dual[\pinf]$  
is finitely generated as a $\LH$-module.

\sn
Then  
$\Selp(E/L)$  has finite generalized $G$-Euler characteristic 
if and only if  
$\Selp(E/\Fcyc)$  has finite 
$\Gamma$-Euler characteristic. 
If this is the case, we have 
$$
    \chi(G,\Selp(E/L))\ =\ 
    \chi(\Gamma,\Selp(E/\Fcyc))\times
	\left|\prod_{v\in T}L_v(E,1)\right|,
$$
where  $T$  is the set of finite places  $v\nmid p$  of  $F$  
at which the inertia group in  $G$  has infinite order, and  
$L_v(E,s)$  is the $v$-th Euler factor of the $L$-function of  $E/F$. 
\end{theorem}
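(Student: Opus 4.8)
The plan is to reduce the statement to the general formalism of generalized Euler characteristics of Selmer groups over $p$-adic Lie extensions, as developed by Coates--Howson and, in the form needed here, by Zerbes (\cite{Coates-Howson}, \cite{Zerbes}; see also \cite{Hachimori-Venjakob}, \cite{Coates-Sujatha}, \cite{SerreCG}). Applied to the $p$-primary discrete $G$-module $E[\pinf]$ and the layer $\Fcyc\subset L$ (so that $H=\Gal(L/\Fcyc)$ and $\Gamma=G/H\cong\Zp$), that formalism asserts --- under a short list of hypotheses, recalled below --- that every $\hi=\hi(G,E[\pinf])$ is finite and that $\hi=0$ for $i$ large, that $\Selp(E/L)$ has finite generalized $G$-Euler characteristic if and only if $\Selp(E/\Fcyc)$ has finite $\Gamma$-Euler characteristic, and that in that case
$$
  \chi\big(G,\Selp(E/L)\big)\ =\ \chi\big(\Gamma,\Selp(E/\Fcyc)\big)\cdot\prod_v\lambda_v,
$$
where $\lambda_v$ is an explicit local correction factor at each place $v$ of $F$. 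The required hypotheses are: $p\geq5$ and $G$ without $p$-torsion; good ordinary reduction of $E$ at every $v\in\Sp$; finiteness of $\ShaEF[\pinf]$; finite generation of $\Selp(E/L)\dual/\Selp(E/L)\dual[\pinf]$ over $\LH$; and --- the only one not explicitly among the hypotheses of Theorem \ref{thm:Selmer} --- finiteness of $E(L)[\pinf]$. All but the last are exactly the standing assumptions of Theorem \ref{thm:Selmer}, namely $p\geq5$, $G$ $p$-torsion-free, good ordinary reduction, (\ref{thm:Selmer}.1), and (\ref{thm:Selmer}.2).

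The one remaining input, the finiteness of $E(L)[\pinf]$, is supplied by Theorem \ref{thm:MT1}, via the argument already used in the proof of Theorem \ref{thm:control}. Fix a place $w$ of $L$ above some $v\in\Sp$, so that $E(L)[\pinf]\incl E(L_w)[\pinf]$; the completion $L_w$ has the same absolute Galois group as the field $L\Fv$, which is a subfield of the local analogue of $M$ attached to the complete discrete valuation field $\Fv$ (whose residue field is finite, hence essentially of finite type), and $E_{\Fv}$ is a proper smooth variety over $\Fv$ with good reduction. Since $H^1_\et$ of an elliptic curve is its $\ell$-adic Tate module up to a Tate twist, and that twist becomes trivial over the finite extension $L_w(\mpinf)$ of $L_w$ --- finite because $L\supseteq\Fcyc$ forces $L_w$ to contain the local cyclotomic $\Zp$-extension --- Theorem \ref{thm:MT1} applied with $i=1$ over $L_w(\mpinf)$ gives that $E(L_w(\mpinf))[\pinf]$, hence a fortiori $E(L_w)[\pinf]$ and $E(L)[\pinf]$, is finite.

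Granting this, the formalism applies and produces the stated equivalence together with the displayed identity, and it remains only to evaluate the local factors $\lambda_v$ by the standard local computations (\cite{Coates-Howson}): $\lambda_v$ is trivial at the archimedean places because $p$ is odd; trivial at $v\in\Sp$, where the good ordinary hypothesis together with the presence of the local cyclotomic $\Zp$-extension inside $L_w$ makes the contribution cancel against the one over $\Fcyc$; trivial at a finite place $v\nmid p$ at which the inertia subgroup of $G$ is finite, the local extension being then essentially unramified; and, at a finite place $v\nmid p$ at which the inertia subgroup of $G$ has infinite order --- that is, at $v\in T$ --- a computation with the local Euler polynomial $P_v(X)=\det\!\big(1-\Frob_vX\mid\Vl(E)^{I_v}\big)$, $\ell\neq p$, identifies $|\lambda_v|$ with $|P_v(|\kappa(v)|^{-1})|=|L_v(E,1)|$, $\kappa(v)$ being the residue field of $v$. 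Multiplying over all $v$ leaves exactly the factor $\big|\prod_{v\in T}L_v(E,1)\big|$, which is the formula in the statement.

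I expect the main difficulty to lie not in the Euler characteristic machinery, which is cited, but in confirming that it applies here with no hidden extra hypotheses. First, Theorem \ref{thm:MT1} is a statement about a complete field, so extracting the finiteness of $E(L)[\pinf]$ from it requires passing to a completion at a place above $p$, where the local, complete form of the Kummer tower $M$ is in force; this is the same maneuver used throughout Section \ref{sec:application} (cf.\ Theorem \ref{thm:control}) and goes through verbatim, modulo the routine identification of the absolute Galois group of $L_w$ with that of its ``algebraic core'' $L\Fv$. Second, one must verify that the cited theorem really requires only the \emph{global} finiteness of $E(L)[\pinf]$ --- handling the places $w\nmid p$, where $E(L_w)[\pinf]$ is typically infinite, directly through the local Euler factors $L_v(E,1)$ rather than through a local torsion hypothesis --- and that (\ref{thm:Selmer}.1) and (\ref{thm:Selmer}.2) are precisely the arithmetic and $\LH$-module inputs it needs. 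With these confirmed, the remaining bookkeeping of the local factors $\lambda_v$ is routine.
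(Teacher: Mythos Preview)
Your overall strategy matches the paper's exactly: invoke Zerbes's theorem (\cite{Zerbes}, Thm.\ 1.1) and check that the extra hypotheses needed there hold in the present situation, the key new input being the finiteness of $E(L)[p^\infty]$ supplied by Theorem~\ref{thm:MT1}. Your derivation of that finiteness by passing to a completion $L_w$ at a place $w\mid p$ and embedding $L\Fv$ into the local analogue of $M$ over $\Fv$ is correct and is precisely how Theorem~\ref{thm:MT1} (a local statement) is meant to be applied globally.

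There is, however, a genuine gap. Zerbes's theorem requires not only the finiteness of $H^i(H,E(L)[p^\infty])$ (which, as you say, follows from the finiteness of $E(L)[p^\infty]$ via \cite{SerreCG}) but also the finiteness of $H^i(H_w,E_v(\kappa_w))$ for every $v\in\Sp$, where $E_v$ is the reduction of $E$ modulo $v$ and $\kappa_w$ is the residue field of $L$ at a place $w$ above $v$. You do not address this second condition; your discussion of possible ``hidden extra hypotheses'' worries about places $w\nmid p$, but the missing ingredient is at $w\mid p$. The point is that $\kappa_w$ is a priori an infinite algebraic extension of the residue field of $\Fv$, and for ordinary $E_v$ the group $E_v(\overline{\kappa(v)})[p^\infty]\simeq\Qp/\Zp$ is infinite, so finiteness of $E_v(\kappa_w)$ is not automatic. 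The paper closes this gap with Lemma~\ref{lem:residue}(ii): since the residue field of $\Fv$ is finite, the residue field of the local $M$ over $\Fv$ is finite, hence so is $\kappa_w$, and therefore $E_v(\kappa_w)$ is a finite group. Once you insert this observation, your argument is complete and coincides with the paper's.
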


\begin{proof}
This has been proved by Zerbes 
(\cite{Zerbes}, Thm.\ 1.1) under the additional assumption that  
$H^i(H,E(L)[\pinf])$  and  
$H^i(H_w,E_v(\kappa_w))$  are finite for all  
$i\geq 0$  and  $v\in\Sp$, where  
$H_w$  is the decomposition group in  $H$  of a place  
$w$  of  $L$  lying above  $v$, 
$E_v$  is the reduction of  $E$  mod $v$, and  
$\kappa_w$  is the residue field of  $w$. 
These conditions hold because  
$E(L)[\pinf]$  is finite by Theorem \ref{thm:MT1} and
$E_v(\kappa_w)[\pinf]$  is finite since  $\kappa_w$  is finite 
by Lemma \ref{lem:residue}.
\end{proof}

Next we address the question of the existence 
of non-trivial pseudo-null $\LG$-submodules 
of Selmer groups. 
Recall that a continuous $\LG$-module  $N$  is said to be 
{\it pseudo-null} if  
$\Ext_{\LG}^i(N,\LG)=0$  for  $i=0,1$  
(\cite{Venjakob}).

\begin{theorem}
\label{thm:pseudo-null}
Assume  $p\geq 3$. 
Let  $L$  be a $p$-adic Lie extension of  $F$  
contained in  $M$  and unramified outside a finite set 
of places of  $F$. 
Assume  
$G=\Gal(L/F)$  has no $p$-torsion elements. 
Let  $E$  be an elliptic curve over  $F$  
which has good reduction at every  $v\in\Sp$. 
Assume further the following two conditions:

\sn
$(\ref{thm:pseudo-null}.1)$ \ 
$\dim(G_v)\geq 2$  for any finite place  $v$  of  $F$  at which  
$E$  has bad reduction. 
\\
(Here, $G_v$  is a decomposition group of  $v$  in  $G$.)

\sn
$(\ref{thm:pseudo-null}.2)$ \ 
$\Selp(E/L)\dual$  is a torsion $\LG$-module.

\sn
Then  $\Selp(E/L)\dual$  contains no non-trivial pseudo-null 
$\LG$-submodules. 
\end{theorem}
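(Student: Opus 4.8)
The plan is to reduce Theorem \ref{thm:pseudo-null} to the known criteria for the absence of pseudo-null submodules in Selmer groups over $p$-adic Lie extensions. In the classical case of the cyclotomic $\Zp$-extension this goes back to Greenberg's theorem that $\Selp(E/\Fcyc)\dual$ has no nontrivial finite submodule; the higher-dimensional versions, formulated over general $p$-adic Lie extensions under the hypotheses $(\ref{thm:pseudo-null}.1)$ (``$\dim(G_v)\geq 2$ at the bad places''), the absence of $p$-torsion in $G$, good reduction above $p$, and the torsion of $\Selp(E/L)\dual$, are available in the literature (see \cite{Greenberg}, \cite{Hachimori-Venjakob}, \cite{Venjakob}), \emph{provided} one also knows that $E(L)[\pinf]$ is finite. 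This last input is exactly what Theorem \ref{thm:MT1} supplies: applying it with $i=1$ to the abelian variety $E$, and noting that any $p$-power torsion point of $E$ rational over a place $w$ of $L$ lying above a place $v$ of $F$ is already rational over a finite subextension $F'F_v$ of $M_{F_v}$, we obtain $E(L_w)[\pinf]\subset E(M_{F_v})[\pinf]$, finite by Theorem \ref{thm:MT1} applied over $K=F_v$; in particular $E(L)[\pinf]$ is finite. So the theorem will follow once the remaining hypotheses are matched.

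It is worth recalling the shape of the argument, to see that nothing further is needed. One uses the Greenberg-type exact sequence which, under $H^2(\FS/L,E[\pinf])=0$, reduces to
$$
  0\ \longrightarrow\ \Selp(E/L)\ \longrightarrow\ H^1(\FS/L,E[\pinf])\ \longrightarrow\ \bigoplus_{v\in S}\mathcal{H}_v(L)\ \longrightarrow\ 0,
$$
with local terms $\mathcal{H}_v(L)=\varinjlim_{F'}\bigoplus_{w\mid v}H^1(L_w,E)[\pinf]$, the condition at $v\mid p$ coming from the good reduction of $E$. The vanishing $H^2(\FS/L,E[\pinf])=0$ is deduced from $(\ref{thm:pseudo-null}.2)$ and the finiteness of $E(L)[\pinf]$ exactly as in the proof of Theorem \ref{thm:rankH}, where it appeared as condition $(\ref{thm:rankH}.4)$. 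Dualizing exhibits $\Selp(E/L)\dual$ as a quotient of $H^1(\FS/L,E[\pinf])\dual$ by $\bigoplus_{v\in S}\mathcal{H}_v(L)\dual$. Since $H^1(\FS/L,E[\pinf])\dual$ has no nontrivial pseudo-null $\LG$-submodule --- a standard fact, using the finiteness of $H^0(\FS/L,E[\pinf])$ and the vanishing of $H^2(\FS/L,E[\pinf])$ --- it suffices to rule out a pseudo-null submodule of $\Selp(E/L)\dual$ lying in the image of $\bigoplus_{v\in S}\mathcal{H}_v(L)\dual$: at the places $v\nmid p$ this is precisely where hypothesis $(\ref{thm:pseudo-null}.1)$ enters, together with the finiteness of the residue field $\kappa_w$ of each $w\mid v$ (which holds by Lemma \ref{lem:residue}), and at $v\mid p$ one uses the local analysis for $E$ of good reduction at $p$.

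The point to stress is that all the substantive work above --- the absence of pseudo-null submodules in the global term, the local analyses at $v\mid p$ and at $v\nmid p$, and the homological bookkeeping --- is carried out in the references under exactly one hypothesis not among ours, namely the finiteness of $E(L)[\pinf]$; and this is the hypothesis removed here by Theorem \ref{thm:MT1}, with the auxiliary finiteness of residue fields provided by Lemma \ref{lem:residue}, just as in the proofs of Theorems \ref{thm:control} and \ref{thm:Selmer}. Thus the main difficulty is not in the present argument at all: it was overcome in proving Theorem \ref{thm:MT1}, and the function of this theorem is simply to record that the hitherto-imposed condition on the $p$-power torsion of $E(L)$ is now automatic.
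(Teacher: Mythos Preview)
Your approach is essentially the paper's: invoke the existing pseudo-null criteria from the literature (the paper cites Hachimori--Ochiai, Thm.~3.2, specifically), and supply the one genuinely new input, the finiteness of $E(L)[\pinf]$, via Theorem~\ref{thm:MT1}; the vanishing of $H^2(\FS/L,E[\pinf])$ then follows from Hachimori--Venjakob, Thm.~7.2, exactly as you indicate.

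Two points to tighten. First, you invoke Lemma~\ref{lem:residue} at the places $v\nmid p$, but that lemma concerns the residue fields at places \emph{above} $p$; it is at $v\in\Sp$ that one needs the residue field $\kappa_w$ of $L$ to be finite, and this is where the paper uses Lemma~\ref{lem:residue}. Second, your claim that ``the only hypothesis not among ours is the finiteness of $E(L)[\pinf]$'' is slightly too strong: the black-box criterion also requires $\dim(G_v)\geq 2$ at $v\in\Sp$ and that $L/F$ be deeply ramified there. These are not hypotheses of Theorem~\ref{thm:pseudo-null} and are not automatic from the literature; they must be checked for our particular $L\subset M$. The paper handles them by citing Hachimori--Venjakob, Lem.~3.9, for the dimension bound and Coates--Greenberg, Thm.~2.13, for deep ramification. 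With these two adjustments your argument matches the paper's.
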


\begin{proof}
This follows from Theorem 3.2 of \cite{Hachimori-Ochiai}, 
with the help of Theorem 7.2 of \cite{Hachimori-Venjakob} and our 
Theorem \ref{thm:MT1}. 
(For  $\dim(G_v)\geq 2$  at  $v\in\Sp$, see 
Lemma 3.9 of \cite{Hachimori-Venjakob}. 
Note also that the residue fields of $L$  at places 
above  $p$  are finite (Lem.\ \ref{lem:residue})  and that  
$L/F$  is deeply ramified at  $v\in\Sp$  
(\cite{Coates-Greenberg}, Thm.\ 2.13).
\end{proof}

\begin{remark}
We have the same conclusion as in the above theorem 
without assuming (\ref{thm:pseudo-null}.2) if
\\
-- $F$  is abelian over  $\Q$  and contains  $\Q(\mp)$, 
\\
-- $L\supset\Fcyc$  and  $\dim(G)=2$, and 
\\
-- $E$  is defined over  $\Q$  and has good ordinary reduction 
at all  $v\in\Sp$.
\\
This is because one can show (\ref{thm:pseudo-null}.2) 
as in the proof of Theorem \ref{thm:rankH}.
\end{remark}


\medskip\noindent
(Y.\ K.)\\
Seika Girls' High School \\
4-19-1, Sumiyoshi, Hakata-ku, Fukuoka, 812-0018 Japan \\
Email address: {\tt hechitinukibire@yahoo.co.jp}

\sn
(Y.\ T.)\\
Faculty of Mathematics, Kyushu University \\
744, Motooka, Nishi-ku, Fukuoka, 819-0395 Japan\\
Email address: {\tt taguchi@math.kyushu-u.ac.jp}

\end{document}